\tikzset{commutative diagrams/diagrams={ampersand replacement=\&}}
\definecolor{darkblue}{rgb}{0.0, 0.0, 0.8}
\definecolor{darkred}{rgb}{0.8, 0.0, 0.0}
\definecolor{darkgreen}{rgb}{0.0, 0.8, 0.0}
\let\original@bibitem\bibitem
\def\bibitem#1#2\par{%
  \@ifundefined{cited@#1}{}{\original@bibitem{#1}#2\par}%
}
\renewcommand\citation[1]{\global\@namedef{cited@#1}{}}
\theoremstyle{definition}
\newtheorem{theorem}{Theorem}[section] % section
\newtheorem{lemma}[theorem]{Lemma}
\newtheorem{proposition}[theorem]{Proposition}
\newtheorem{example}[theorem]{Example}
\newtheorem{remark}[theorem]{Remark}
\newtheorem{definition}[theorem]{Definition}
\newcommand{\E}{\mathbb{E}}
\newcommand{\R}{\mathbb{R}}
\newcommand{\V}{\mathbb{V}}
\newcommand{\X}{\mathbb{X}}
\newcommand{\Y}{\mathbb{Y}}
\newcommand{\T}{\mathbb{T}}
\newcommand{\U}{\mathbb{U}}
\newcommand{\CC}{\mathcal{C}}
\newcommand{\DD}{\mathcal{D}}
\newcommand{\PP}{\mathcal{P}}
\renewcommand{\SS}{\mathcal{S}}
\newcommand{\TT}{\mathcal{T}}
\newcommand{\Ifunc}{\mathbf{I}}
\newcommand{\Ffunc}{\mathbf{F}}
\newcommand{\Gfunc}{\mathbf{G}}
\newcommand{\Vfunc}{\mathbf{V}}
\newcommand{\Int}{\mathbf{Int}}
\newcommand{\Set}{\mathbf{Set}}
\newcommand{\Groups}{\mathbf{Groups}}
\newcommand{\Ab}{\mathbf{Ab}}
\newcommand{\vect}{\mathbf{vect}}
\newcommand{\Mor}{\mathbf{Mor}}
\newcommand{\Ob}{\mathbf{Ob}}
\newcommand{\Top}{\mathbf{Top}}
\newcommand{\Reeb}{\mathbf{Reeb}}
\providecommand{\KEY}[1]
{
  \small	
  \textbf{Key words.} #1
}
\date{}
\begin{document}
\title{Tree decomposition of Reeb graphs, parametrized complexity, and applications to phylogenetics}
\maketitle

\author{\centerline{Anastasios Stefanou
    \footnote{stefanou.3@osu.edu; 614-688-3198\\
ORCID id: https://orcid.org/0000-0002-5408-9317\\
%Jennings Hall 377, 1735 Neil Ave, Columbus, OH 43210\\
        Mathematical Biosciences Institute; Department of Mathematics \\
        The Ohio State University}
}}

\begin{abstract}

Inspired by the interval decomposition of persistence modules and the extended Newick format of phylogenetic networks, we show that, inside the larger category of \textit{ordered Reeb graphs}, every Reeb graph with $n$ leaves and first Betti number $s$, is equal to a coproduct of at most $2^s$ trees with $(n + s)$ leaves. 
Reeb graphs are therefore classified up to isomorphism by their tree decomposition.
An implication of this result, is that the isomorphism problem for Reeb graphs is fixed parameter tractable when the parameter is the first Betti number. 
We propose ordered Reeb graphs as a model for time consistent phylogenetic networks and propose a certain Hausdorff distance as a metric on these structures.
\end{abstract}

\noindent\KEY{Coproducts, decomposition, complexity, Betti number, Reeb graphs, phylogenetic networks.}

\section*{Acknowledgements}
A. Stefanou was partially supported by the National Science Foundation through the grant NSF-CCF-1740761 TRIPODS TGDA@OSU, and also by the grant NSF DMS-1440386 Mathematical Biosciences Institute, at The Ohio State University.
The author gratefully thanks two anonymous reviewers whose feedback significantly increased the quality of the manuscript.
Furthermore, the author thankfully acknowledge F.~Mémoli, E.~Munch, J.~Curry, S.~Kurtek, W.~KhudaBukhsh, and  A.~Foroughipour for many helpful discussions during the course of this work.

\section{Introduction}
Reeb graphs encode the evolution of connected components of a space $\X$ along a real valued map $f$ on $\X$ \cite{reeb1946points}.
Originated from Morse theory, Reeb graphs have been of particular interest to the fields of computational geometry  \cite{cohen2009extending}, \cite{agarwal2006extreme}, \cite{harvey2010randomized} ,\cite{di2016edit}, and computational topology  \cite{morozov2013interleaving}, \cite{edelsbrunner2008reeb}, \cite{cole2004loops}, \cite{bauer_et_al:LIPIcs:2015:5146}, and they have found a plethora of applications in computer graphics and computer science \cite{escolano2013complexity}, \cite{hilaga2001topology}, \cite{chazal2013gromov}, \cite{ge2011data}, \cite{dey2013efficient}, \cite{wood2004removing}. 
See  \cite{biasotti2008reeb} for a survey.
One variation of Reeb graphs that has recently been proposed is Mapper \cite{singh2007topological} which has been quite successful on big data sets \cite{nicolau2011topology}, \cite{yao2009topological}.

\subsection{Related work}
de Silva et al.~(2016) showed that any Reeb graph can be identified with a constructible $\Set$-valued cosheaf on $\R$ \cite{de2016categorified}. 
Thus, Reeb graphs can be thought of as generalized persistence modules in the setting of Bubenik et al.~(2015) \cite{bubenik2015metrics}.
A \textit{generalized persistence module} is any functor $\Ffunc: \PP\to\CC$ from a poset $\PP$ to a category $\CC$ \cite{bubenik2015metrics}.
When $\PP=(\R ,\leq)$ the poset of real numbers and $\CC=\vect_k$ is the category of finite dimensional $k$-vector spaces, we obtain the notion of a \textit{pointwise finite dimensional (p.f.d.) persistence module}.
Crawley-Boevey (2015) has shown that every p.f.d.~persistence module $\Vfunc:(\R ,\leq)\to\vect_k$ decomposes into a direct sum of interval persistence modules \cite{crawley2015decomposition}.
The multiset of intervals $B(\Vfunc)$ associated to $\Vfunc$ is called \textit{the barcode of $\Vfunc$} \cite{carlsson2005persistence}.
Because of this decomposition, one can easily check that the isomorphism complexity of p.f.d.~persistence modules is polynomial.

However, this is no longer true for arbitrary generalized persistence modules.
Bjerkevik et al.~(2018) has shown that the isomorphism complexity of Reeb graphs is GI-complete \cite{bjerkevik_et_al:LIPIcs:2018:8726}, namely deciding if two Reeb graphs are isomorphic it is at least as hard as the graph isomorphism problem.
However the graph isomorphism problem has shown to be fixed parameter tractable with respect to several parameters, such as: tree-distance
width \cite{yamazaki1997isomorphism}, tree-depth \cite{bouland2012tractable}, and tree-width \cite{lokshtanov2017fixed}.
Hence it is natural to wonder whether Reeb graphs, like graphs, are fixed parameter tractable with respect 
to some topologically meaningful parameter.

On the other hand, we can think of any Reeb graph as a weighted directed acyclic graph.
Weighted directed acyclic graphs  are used as the main method for modelling phylogenetic trees and networks \cite{cardona2013cophenetic}, \cite{billera2001geometry}, \cite{semple2003phylogenetics}, \cite{huson2010phylogenetic}.
The isomorphism classes of phylogenetic trees are in one to one correspondence with nested parentheses, a method known today as the \textit{Newick format}, which was already noticed by A.~Cayley (1857).
For general phylogenetic networks, Cardona et al.~(2008) proposed a variant of the Newick format, called the \textit{extended Newick format} \cite{cardona2008extended}.
The idea is: given a fixed ordering on the children nodes of a rooted phylogenetic network with $n$-labelled leaves and $s$ reticulations (Betti number), we can represent that network as a phylogenetic tree with $(n + s)$-labelled leaves where some of the leaves are allowed to have repeated nodes. 
A.~Dress (2007) proposed a categorical approach to view phylogenetic networks called \textit{$X$-nets} \cite{dress2007category}. 

\subsection{Our contribution}
Inspired by the decomposition of p.f.d.~persistence modules into interval persistence modules, we show that any Reeb graph decomposes into a coproduct of trees. 
The construction of each of these trees is an analogue of the extended Newick format in the setting of arbitrary Reeb graphs (not necessary rooted).
First, in Sec.~2 and 3 we mention the basic definitions and tools from category theory \cite{mac2013categories}, and the setting of Reeb graphs as studied in \cite{de2016categorified}, which we need in order to formulate properly the tree decomposition of Reeb graphs.
In Sec 4 we show that the category of  Reeb graphs with a fixed edge structure forms a thin category inside of which every Reeb graph decomposes into a coproduct of trees.
As an implication of this decomposition, we show that Reeb graph isomorphism is fixed parameter tractable where the parameter is the first Betti number.

\section{Categorical structures}
Category theory is fundamentally a language that formalizes mathematical structure having the capability of bridging together different mathematical constructions or theories.
In this section we give the basic definitions and tools from category theory that we need.

\subsection{Basic definitions}
Category theory is a general theory of functions.
A general notion of a function is called a \textit{morphism} and the notion of a set is replaced by an \textit{object}.
An object can be any mathematical construction and its not necessary to be a set.
In contrast with set theory the focus is concentrated in the study of morphisms between objects rather than just study the objects themselves.
In particular, we require that morphisms between objects to have a composition operation that is associative and unital.
The structure we obtain is said to be a \textit{category}.
A good source for an introduction to category theory is \cite{mac2013categories}.

First we define the notion of a category.
Here by a \textit{class} we mean a collection of sets that is unambiguously defined by property that all these sets share in common.
A class might not be a set and if that is the case is called a \textit{proper class}.

\begin{definition}
A \textit{category} $\CC$ consists of
\begin{itemize}
    \item a class $\Ob\CC$ whose elements $X,Y,\ldots$ are called \textit{objects}, together with
    \item for each pair of objects $X,Y$ in $\CC$ a set $\Mor_{\CC}(X,Y)$, whose elements are called \textit{morphisms} and denoted by $f:X\to Y$, and each having a unique source $X$ and a unique target $Y$,
    \item for each object $X$ in $\CC$ an \textit{identity} morphism $I_X:X\to X$,
    \item a binary operation $\circ: \Mor_{\CC}(X,Y)\times\Mor_{\CC}(Y,Z)\to\Mor_{\CC}(X,Z)$, $(f,g)\mapsto g\circ f$
    called \textit{composition} which is associative and unital, i.e.
        \begin{align*}
        h\circ(g\circ f)&=(h\circ g)\circ f\\
        f\circ I_X&=I_Y\circ f
    \end{align*}
    for any triple of morphisms $f:X\to Y$, $g:Y\to Z$ and $h:Z\to W$ in $\CC$.
\end{itemize}
\end{definition}
\begin{definition}
A morphism $f:X\to Y$ is said to be an \textit{isomorphism} from $X$ to $Y$ if there exists a morphism $g:Y\to X$ (often called the inverse) such that $g\circ f=I_X$ and $f\circ g=I_Y$.
Two objects are said to be isomorphic if there exists an isomorphism from $X$ to $Y$.
\end{definition}

%If the class of object $\Ob\CC$ is a set then $\CC$ is said to be \textit{small}.
%Else, if $\Ob\CC$ is a proper class (not a set) then $\CC$ is said to be \textit{large}.
\begin{example}
Examples of  categories include:
\begin{itemize}
\item the category $\Set$  whose objects are sets and morphisms are  functions between sets
\item the category $\Top$ whose objects are topological spaces and morphisms are continuous maps.
\item the category $\Groups$ whose objects are groups and morphisms are group homomorphisms
\item the category $\Ab$ whose objects are abelian groups and morphisms are group homomorphisms
\end{itemize}
\end{example}
\begin{definition}
A category $\DD$ whose objects and morphisms are in $\CC$ and with the same identities and composition operation as of $\CC$ is said to be a \textit{subcategory of $\CC$}.
\end{definition}
\noindent Let $\CC$ be a category and let $\SS$ be any subset of $\Ob\CC$.
Then we can consider the same sets $\Mor_\CC(X,Y)$ of morphisms between $X,Y \in \SS$.
That way we obtain a category with the same morphisms but fewer objects.
We say that $\SS$ forms a \textit{full subcategory} of $\CC$.
In a full subcategory we only need to specify what are the objects so we often say $\SS$ is \textit{the full subcategory of $\CC$ whose objects are in $\SS$}.
For example the category $\Ab$
is the full subcategory of $\Groups$ whose objects are abelian groups.

\begin{example}
The type of categories we work on are the following:
\begin{itemize}
\item \textit{slice categories}: given a category $\CC$ and an object $X$ we consider the slice category $\CC\downarrow X$ whose objects are tuples $(Y,f)$ where $Y\in\Ob\CC$ and $f\in\Mor_{\CC}(Y,X)$, and morphisms $\varphi:(Y,f)\to(Z,g)$ are ordinary morphisms $\varphi:Y\to Z$ in $\CC$ such that $g\circ\varphi =f$.
\item \textit{thin categories}: a category $\CC$ is called thin if for every pair of objects $X,Y$ in $\CC$ there exists at most one morphism $f:X\to Y$ in $\CC$.
When a morphism $f:X\to Y$ exists we write $X\leq Y$.
A thin category coincides with the notion of a \textit{preorder}.
\end{itemize}
\end{example}

\noindent Now, we define the notion of maps that preserve the structure of a category.
\begin{definition}
A \textit{functor} $\Ffunc:\CC\to\DD$ between categories consists of
\begin{itemize}
    \item a function $\Ffunc:\Ob\CC\to\Ob\DD$, $X\mapsto \Ffunc(X)$, together with
    \item for each pair of objects $X,Y$ in $\CC$, a function
    \begin{align*}
    \Ffunc_{X,Y}:\Mor_{\CC}(X,Y)&\to\Mor_{\DD}(\Ffunc(X),\Ffunc(Z))\\
    f&\mapsto\Ffunc[f]
    \end{align*}
    such that  for any object $X$ and any morphisms $f:X\to Y$, $g:Y\to Z$ in $\CC$:
    \begin{align*}
        \Ffunc[g\circ f]&=\Ffunc[g]\circ\Ffunc[f]\\
        \Ffunc[I_{X}]&=I_{\Ffunc(X)}.
    \end{align*}
\end{itemize}
\end{definition}
\noindent When $\CC=\DD$, $\Ffunc$ is called an \textit{endofunctor}.
A special case is the identity endofunctor $\Ifunc_{\CC}:\CC\to\CC$ that sends each object and morphism to itself.

\vspace{1em}
\noindent The collection of all functors from a category $\CC$ to a category $\DD$ forms a category on its own called a \textit{functor category} and it is denoted by $[\CC,\DD]$: the objects are functors $\Ffunc:\CC\to\DD$ and the morphisms are natural transformations $\eta:\Ffunc\Rightarrow\Gfunc$. \begin{definition}
A \textit{natural transformation} $\eta:\Ffunc\Rightarrow\Gfunc$ consists of a family of morphisms $\eta_X:\Ffunc(X)\to\Gfunc(X)$ in $\DD$ one for each object $X$ in $\CC$, such that the diagram
\[
\begin{tikzcd}
\Ffunc(X)\arrow[r,"{\eta_X}"]\arrow[d,swap,"{\Ffunc[f]}"]\&\Gfunc(X)\arrow[d,"{\Gfunc[f]}"]\\
\Ffunc(Y)\arrow[r,"{\eta_Y}"]\&\Gfunc(Y)
\end{tikzcd}
\]
commutes for every morphism $f:X\to Y$ in $\CC$.
In the special case where each $\eta_X$ is an isomorphism in $\DD$, then $\eta$ is said to be a \textit{natural isomorphism} and we write $\Ffunc\cong \Gfunc$.
\end{definition}
Every time we write $\Ffunc\cong \Gfunc$ we mean there exists a natural isomorphism $\eta:\Ffunc\Rightarrow\Gfunc$.

\begin{definition}
A pair of categories $\CC,\DD$ are said to be \textit{equivalent} if there exist functors $\Ffunc:\CC\to\DD$ and $\Gfunc:\DD\to\CC$ such that $\Ffunc\circ\Gfunc\cong \Ifunc_\DD$ and $\Gfunc\circ\Ffunc\cong \Ifunc_\CC$.
In the special case where $\Ffunc\circ\Gfunc= \Ifunc_\DD$ and $\Gfunc\circ\Ffunc= \Ifunc_\CC$, the categories $\CC$ and $\DD$ are said to be \textit{isomorphic}.
\end{definition}

%\begin{remark}
%\label{remk:skeletal}
%A category $\PP$ is a poset if and only if $\PP$ is both thin and skeletal.
%\end{remark}

\subsection{Coproducts}
We define the notion of a coproduct of objects in a category $\CC$.
This is the dual notion of a product \cite{mac2013categories}.
However, here we focus only on the definition of coproducts since this is the only notion we use.
\begin{definition}
Let $X_1,\ldots,X_n$ be objects in $\CC$.
An object is called \textit{the coproduct} of $X_1,X_2,\ldots,X_n$, written $\coprod X_i$, if there exist morphisms $\iota_j:X_j\to \coprod X_i$,  $j=1,2,\ldots,n$ satisfying the following \textit{universal property}: for any object $Y$ and any pair of morphisms $f_j:X_j\to Y$, $j=1,2,\ldots,n$, there exists a unique morphism $f:\coprod X_i\to Y$ such that the diagrams
\[
\begin{tikzcd}
X_j\arrow[r, "{\iota_j}"] \arrow[rd,swap,"{f_j}"]\& \coprod X_i\arrow[d,"{f}"]
\\
\& Y
\end{tikzcd}
\]
commute for $j=1,2,\ldots,n$.
\end{definition}
Note that by the universal property of coproducts, the morphisms $\iota_j$ are uniquely defined up to a unique natural isomorphism.
The morphisms $\iota_j:X_j\to \coprod X_i$ are called \textit{coprojections}.

\begin{definition}
An object $X$ is said to be \textit{decomposable} if it is isomorphic to a coproduct of $n$ objects in $\CC$ where $n\geq2$.
Otherwise $X$ is said to be \textit{indecomposable}.
\end{definition}

\begin{example}
Here we give some basic examples of categorical coproducts.
\begin{itemize}
    \item If $\CC$ is the category of all sets $\Set$, then the coproduct is given by the disjoint union $\coprod$ of sets.
    \item If $\CC$ is the category of groups $\Groups$ then the coproduct is given by the free product $*$ of groups.
    \item If $\CC$ is the category of abelian groups $\Ab$ then the coproduct is the direct sum $\oplus$ of abelian groups.
\end{itemize}
\end{example}

\section{Combinatorial structures}
In this section we consider the setting of Reeb graphs as developed by V.~de Silva et al.~(2016) \cite{de2016categorified}.
We define Reeb graphs and examine how they relate to directed acyclic graphs.

\subsection{Reeb graphs}
The main tool we use to visualize relationships among objects is a \textit{graph}.
A graph $G=(\V,\E)$ consists of a collection $\V$  of objects called \textit{vertices}, e.g. $v_1,v_2,\ldots$ and a set $\E$ of connections $e_1,e_2,\ldots$ between vertices called \textit{edges}.

Generally we can define a Reeb graph as a connected graph $\X$ together with a real valued map $f:\X\to\R$ which is strictly monotone when restricted to edges.
However with this definition we are not making precise the exact way $\X$ can be constructed in conjunction with the map $f$ being monotone restricted to edges.
Making this more precise is what we do in this paragraph.

First we need to talk about the general setting of $\R$-spaces.
An \textit{$\R$-space} $(\X,f)$ is a space $\X$ together with a real valued continuous map $f:\X\to\R$.
A \textit{morphism} of $\R$-spaces $(\X,f),(\Y,g)$--also called a \textit{function preserving map}--is an ordinary continuous map $\varphi:\X\to\Y$ such that $g\circ\varphi=f$.
The collection of these objects forms the slice category $\Top\downarrow\R$.
Now let us return to Reeb graphs.
\begin{definition}
\label{def:Reeb def}
A connected $\R$-space $(\X,f)$ is said to be a \textit{Reeb graph} if
it is constructed by the following procedure, which we call \textit{a structure on $(\X,f)$}:

 Let $S=\{a_1<\ldots<a_{k}\}$ be an ordered subset of $\R$ with $k\geq2$.

\begin{itemize}
    \item For each $i=1,\ldots,k$ we specify a non-empty set $\V_i$ of vertices which lie over $a_i$,
    \item For each $i=1,\ldots,k-1$ we specify a non-empty set of edges $\E_i$ which lie over $[a_i,a_{i+1}]$
    \item For $i = 1,\ldots,k-1$, we specify a `down  map' $D_i:\E_i \to \V_i$
    \item For $i = 1,\ldots,k-1$, we specify an `upper  map' $U_i: \E_i \to \V_{i+1}$.
\end{itemize}
The space $\X$ is the quotient $\U/\sim$ of the disjoint union
$$\U=\biggl(\coprod_{i=1}^{k}(\V_i\times\{a_i\})\biggl)\coprod\biggl(\coprod_{i=1}^{k-1}(\E_i\times [a_i,a_{i+1}])\biggl)$$
with respect to the identifications $(D_i(e),a_i)\sim(e,a_i)$ and $(U_i(e),a_{i+1})\sim(e,a_{i+1})$, for all $i=1,\ldots,k-1$, with the map $f$ being the projection onto the second factor.
\end{definition}
\begin{remark}
Note that  to every Reeb graph corresponds a unique minimal set $S$, known as the \textit{critical set} of $\X$ \cite{de2016categorified}.
We consider a definition for arbitrary set $S$ because this allows for more flexibility: we can describe the morphisms between Reeb graphs easily if we consider a common set $S$ for both Reeb graphs, e.g.~by considering the union of their two $S$-sets.
\end{remark}
The set  $\V=\coprod_{i=1}^{k}\V_i$ is said to be a \textit{vertex-set} for $\X$ and the set $\E=\coprod_{i=1}^{k-1}\E_i$ is said to be a \textit{edge-set} for $\X$\footnote{the reason we call these sets in this way, e.g.~we say `a vertex-set' instead of `the vertex-set', is because they depend on the choice of $S$. }.
If we forget the map $f$ associated to a Reeb graph, then topologically $\X$ forms a graph $(\V,\E)$ on its own.
See Fig.~1~for an example of a Reeb graph.
\begin{definition}
A \textit{morphism} of Reeb graphs $(\X,f)$ and $(\Y,g)$ is any morphism $\varphi:(\X,f)\to(\Y,g)$ between these $\R$-spaces.
\end{definition}
Thus, the collection of all  Reeb graphs forms a full subcategory $\mathbf{Reeb}$  of $\Top\downarrow\R$.
As shown in \cite{de2016categorified} Reeb graphs can be identified with constructible $\Set$-valued cosheaves on $\R$.
This equivalence of categories allows for one to consider the following combinatorial description of the morphisms of  Reeb graphs.
\begin{proposition}[Prop~3.12 in \cite{de2016categorified}]
\label{prop:consistency}
Let $(\X,f),(\Y,g)$ be a pair of Reeb graphs with a common  set $S=\{a_1<\ldots<a_k\}$, let $\V^{\X}$, $\V^{\Y}$, and $\E^{\X}$, $\E^{\Y}$ be their vertex-sets and edge-sets respectively.
Any function preserving map $\varphi:(\X,f)\to(\Y,g)$ of  Reeb graphs is completely determined by %$\Phi=p_1\circ \varphi$ which, in turn, is determined by:
\begin{itemize}
    \item Functions $\varphi_{i}^{\V}:\V_i^{\X}\to \V_i^{\Y}$
    \item Functions $\varphi_{i}^{\E}:\E_i^{\X}\to\E_i^{\Y}$, satisfying the
    \item Consistency conditions: $\varphi_{i}^{\V}D_i^{\X}=D_i^{\Y}\varphi_i^{\E}$ and $\varphi_{i+1}^{\V}U_i^{\X}=U_{i}^{\Y}\varphi_i^{\E}$ for all $1\leq i\leq k-1$.
\end{itemize}
\end{proposition}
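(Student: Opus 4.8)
The plan is to prove the ``completely determined by'' claim directly from Definition~\ref{def:Reeb def}, using only the topology of the gluing quotients $\X=\U^\X/\!\sim$ and $\Y=\U^\Y/\!\sim$ together with the level‑preservation identity $g\circ\varphi=f$. There are two directions: extract the combinatorial data from a given $\varphi$, and conversely show that any combinatorial data satisfying the consistency conditions glues back to a morphism.

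\textbf{Extracting the data from $\varphi$.} Because $g\circ\varphi=f$, the map $\varphi$ carries each fibre $f^{-1}(a_i)$ into $g^{-1}(a_i)$. In either Reeb graph the fibre over $a_i$ is, after performing the identifications in the quotient, exactly the (discrete) set of vertices over $a_i$; restricting $\varphi$ therefore yields a function $\varphi_i^\V:\V_i^\X\to\V_i^\Y$. Next, the open slab $f^{-1}\big((a_i,a_{i+1})\big)$ is the disjoint union $\coprod_{e\in\E_i^\X}\{e\}\times(a_i,a_{i+1})$, whose connected components are the open arcs $\{e\}\times(a_i,a_{i+1})$, and likewise for $\Y$. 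Since each such arc is connected, $\varphi$ sends it into a single arc $\{e'\}\times(a_i,a_{i+1})$ of $\Y$; this defines $\varphi_i^\E:\E_i^\X\to\E_i^\Y$, $e\mapsto e'$. On that arc the projections onto $(a_i,a_{i+1})$ are homeomorphisms and $\varphi$ intertwines them (again by $g\circ\varphi=f$), so in fact $\varphi(e,t)=(\varphi_i^\E(e),t)$ for every $t\in(a_i,a_{i+1})$.

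\textbf{Consistency conditions and the converse.} Taking the limit $t\to a_i^+$ and using continuity of $\varphi$ at the endpoint, together with the two descriptions of that endpoint in the respective quotients, I would obtain
\[
\big(\varphi_i^\V(D_i^\X(e)),a_i\big)=\varphi(D_i^\X(e),a_i)=\varphi(e,a_i)=\lim_{t\to a_i^+}\big(\varphi_i^\E(e),t\big)=\big(D_i^\Y(\varphi_i^\E(e)),a_i\big),
\]
that is, $\varphi_i^\V D_i^\X=D_i^\Y\varphi_i^\E$; the limit $t\to a_{i+1}^-$ gives $\varphi_{i+1}^\V U_i^\X=U_i^\Y\varphi_i^\E$ in the same way. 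Conversely, given set maps $\varphi_i^\V,\varphi_i^\E$ satisfying these equalities, I would define $\tilde\varphi:\U^\X\to\Y$ piecewise by $(v,a_i)\mapsto(\varphi_i^\V(v),a_i)$ and $(e,t)\mapsto(\varphi_i^\E(e),t)$; this is continuous on the coproduct $\U^\X$ (continuous on each summand, with the $\E_i^\X,\V_i^\X$ discrete), and the consistency conditions are exactly what is needed for $\tilde\varphi$ to be constant on the equivalence classes of $\sim$, so it factors through a continuous map $\varphi:\X\to\Y$ with $g\circ\varphi=f$ by the universal property of the quotient topology. Since this construction is inverse to the extraction above, the correspondence $\varphi\leftrightarrow(\varphi_i^\V,\varphi_i^\E)$ is a bijection, which is the assertion.

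\textbf{Where the work is.} The only genuinely delicate point is the claim that $\varphi$ restricted to an open edge has the form $(e,t)\mapsto(\varphi_i^\E(e),t)$, i.e.\ that it lands in a single edge of $\Y$ and is ``the identity'' in the $\R$‑coordinate: this uses connectedness of the open arc (to pin down the target edge) together with the rigidity forced by $g\circ\varphi=f$ (to pin down the parametrization), plus the preliminary identification of $f^{-1}(a_i)$ with the vertex set and of $f^{-1}\big((a_i,a_{i+1})\big)$ with the disjoint union of open edges, which follows by inspecting neighbourhoods in the quotient topology. Everything else is bookkeeping with the two defining relations $(D_i(e),a_i)\sim(e,a_i)$ and $(U_i(e),a_{i+1})\sim(e,a_{i+1})$.
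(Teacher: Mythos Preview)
Your argument is correct and is the natural one: read off the vertex and edge maps from level-preservation and connectedness of open arcs, obtain the consistency identities by continuity at the glued endpoints, and conversely glue the combinatorial data using the universal property of the quotient. There is nothing to compare against here, since the paper does not supply its own proof of this proposition; it simply quotes it as Proposition~3.12 of de~Silva--Munch--Patel~\cite{de2016categorified}.
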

Any function preserving map $\varphi:(\X,f)\to(\Y,g)$, 
since $f$ and $g$ are by definition the projections to the second coordinate of $\X$ and $\Y$ respectively, is given by 
\begin{align*}
    \varphi:(\X,f)&\to(\Y,g)\\
    [(v,t)]&\mapsto [(\varphi_i^{\V}(v),t)]\text{, for all  }v\in \V_i^\X\text{, for all }i=1,\ldots,k.\\
 [(e,t)]&\mapsto [(\varphi_i^{\E}(e),t)]\text{, for all  }e\in \E_i^\X\text{, for all }i=1\ldots,k-1.\footnote{the bracket $[\text{ }]$ denotes an equivalence classes in the quotient space $\X=\U/\sim$.}
\end{align*}

\begin{figure}[h!]
\label{fig:definition}
  \begin{center}
      \includegraphics[width=1.2\textwidth]{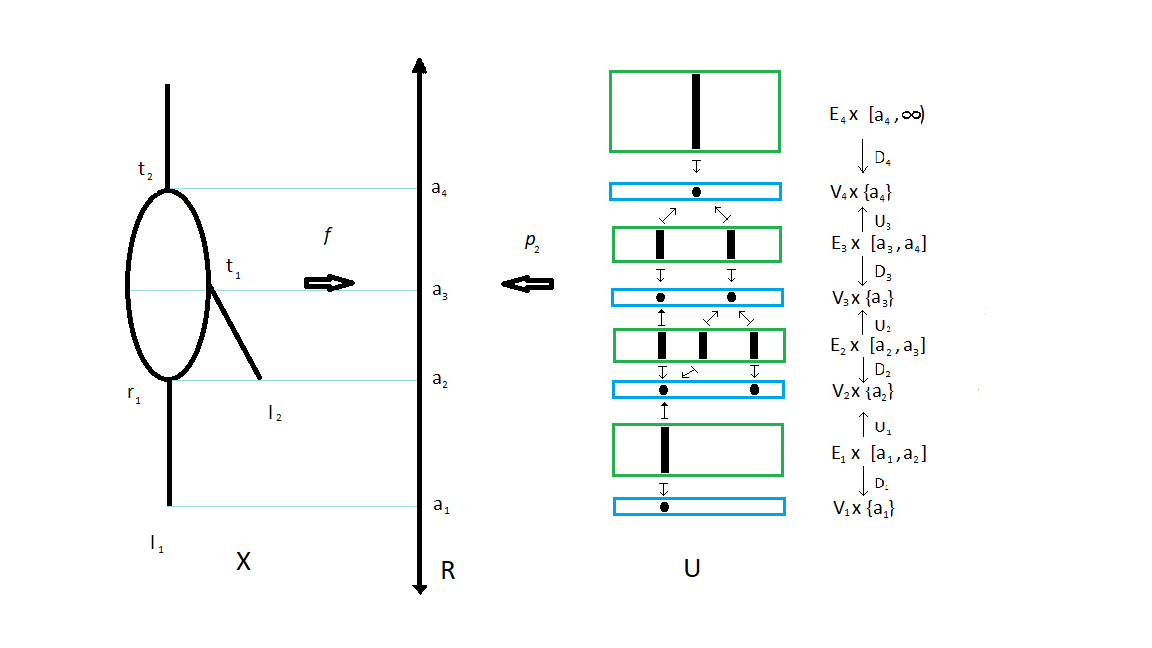}
      \caption{An example of a Reeb graph}
      \end{center}
\end{figure}

\subsection{Reeb graphs viewed as directed acyclic graphs}
\label{sec:vertex-weighted}
If we allow the edges $e$ of a graph $G$ to have a direction, i.e. $x\xrightarrow{e} y$ then the resulting graph is said to be a \textit{digraph} and the edges are called \textit{directed edges} or \textit{arrows}. 
A \textit{directed path of length $n$} on a digraph is a sequence of arrows $x_0\to x_1\to x_2\to x_3\to\ldots\to x_{n-1}\to x_n$ in $G$.
A directed path  that starts and end at the same vertex is called a \textit{directed cycle}.
A digraph with no directed cycles is said to be a \textit{directed acyclic graph (DAG)}.

With the map $f:\X\to\R$, we give a direction to each edge $e$ connecting $x_1$ and $x_2$ in $\X$, by declaring $e:x_1\to x_2$ whenever $f(x_1)> f(x_2)$. 
That way, the underlying graph $(\V,\E)$ of a Reeb graph $\X$ obtains the structure of a directed acyclic graph.
Furthermore, each vertex $v$ of $\X$ receives a real weight  $w(v):=f(v)$ via $f$.
So, every Reeb graph can be thought of as a vertex-weighted DAG.
\begin{remark}
\label{rem:weighted}
Note, in particular, for Reeb graphs, because of the vertex-weight $w(v):=f(v)$ each directed edge $e:x_1\to x_2$ receives a strictly positive weight  $w(e):=f(x_1)-f(x_2)>0$, where $f(x_1)=a_{i+1}$ and $f(x_2)=a_{i}$, for some $i=1,\ldots,k-1$, are consecutive critical numbers of $(\X,f)$.
\end{remark}

Let $(\X,f)$ be a  Reeb graph with critical set $S=\{a_1,\ldots,a_k\}$. 
Let $x$ be a vertex of $\X$ with $f(x)=a_i$.
Then an edge $e\in D_i^{-1}(x)$ ($e\in U_{i-1}^{-1}$) is said to be an edge incident from above (below) of $x$.
Also the number of edges $\mathbf{indeg}(x)=|D_i^{-1}(x)|$ that are incident from above of $x$ is called the \textit{indegree of $x$}.
Similarly, the number of edges $\mathbf{outdeg}(x)=|U_{i-1}^{-1}(x)|$ is called the \textit{outdegree of $x$}.

There are three cases that can happen for a node $x$:
\begin{itemize}
 \item If $\mathbf{indeg}(x)\leq 1$, then $x$ is said to be a \textit{tree-vertex}.
 \item If $\mathbf{indeg}(x)\geq 2$ and with $x$ a cycle in the Reeb graph (viewed as a DAG) closes ($x$ is the `bottom' of a cycle), then $x$ is said to be a \textit{reticulation-vertex}.
 \item If $\mathbf{indeg}(x)\geq 2$ and with $x$ no cycles closes in the Reeb graph, then $x$ again is said to be a \textit{tree-vertex}.
\end{itemize}
A vertex $x$ such that $\mathbf{indeg}(x)=\mathbf{outdeg}(x)=1$ is called \textit{regular}.
 We denote by $T(\X)$ the set of tree-vertices of $\X$, and by $R(\X)$ the set of all reticulation-vertices of $\X$.
If ($\mathbf{outdeg}(x)=0$) or ($\mathbf{outdeg}(x)=1$ and $\mathbf{indeg}(x)=0$), then $x$ is said to be a \textit{leaf}.
By definition, it may happen that a leaf is also a reticulation-vertex (i.e.~$\mathbf{outdeg}(x)=0$ and $\mathbf{indeg}(x)\geq2$).
\begin{remark}
The reason we consider this definition of leaves is so that the tree decomposition of Reeb graphs to work for this type of reticulation vertices. 
This would be more clear in Ex.~\ref{ex:3}.
\end{remark}
We denote by $L(\X)$ the set of all leaves of $\X$.
Then, $L(\X)\subset T(\X)$.

Viewing Reeb graphs as DAGs  is also important for computing the Betti number $s=B_1(\X)$ of a Reeb graph $\X$.
The Betti number counts the minimum number of cycles of a graph that generate all possible cycles in the graph.
By definition, a Reeb graph $(\X,f)$ is a \textit{tree} if and only if it has no reticulation-vertices (and thus, no cycles).
Since $\X$ is always connected by definition, the Euler characteristic provides the formula
$$s=|\E|-|\V|+1,$$
Furthermore, from the theory of directed graphs we have the \textit{degree sum formula}
$$\sum_{x\in \V}\mathbf{indeg}(x)=\sum_{x\in \V}\mathbf{outdeg}(x)=|\E|$$
By combining these two equations we get
$$s= 1+\sum_{x\in \V}(\mathbf{indeg}(x)-1).$$
\begin{proposition}
Let $r_1,\ldots,r_m$ be the reticulation-vertices of $\X$ with indegrees $d_1,\ldots,d_m\geq 2$ respectively.
Then 
$$s=\sum_{i=1}^{m}(d_i-1).$$
\end{proposition}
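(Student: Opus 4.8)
The plan is to read the identity off from the formula $s = 1 + \sum_{x\in\V}(\mathbf{indeg}(x)-1)$ that was just derived. Since $\V = T(\X)\sqcup R(\X)$ and the reticulation-vertices are $r_1,\dots,r_m$ with $\mathbf{indeg}(r_i)=d_i$, that formula rewrites as
\[
s \;=\; 1 + \sum_{i=1}^m (d_i-1) \;+\; \sum_{x\in T(\X)}\bigl(\mathbf{indeg}(x)-1\bigr),
\]
so the whole statement reduces to the single claim $\sum_{x\in T(\X)}(\mathbf{indeg}(x)-1)=-1$, equivalently $\sum_{x\in T(\X)}\mathbf{indeg}(x)=|T(\X)|-1$. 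Everything else is then elementary.

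To establish that claim I would exhibit a spanning tree $\T$ of the graph underlying $\X$ whose non-tree edges are obtained by discarding, at each reticulation-vertex $r_i$, exactly $d_i-1$ of its $d_i$ incoming edges — so that $\T$ keeps every incoming edge of every tree-vertex and exactly one incoming edge of each reticulation-vertex. Granting such a $\T$, it has $|\E|-\sum_{i=1}^m(d_i-1)$ edges; a spanning tree on $|\V|$ vertices has $|\V|-1$ edges, so $|\E|-\sum_i(d_i-1)=|\V|-1$, and comparing with $s=|\E|-|\V|+1$ gives $s=\sum_{i=1}^m(d_i-1)$ at once (equivalently, it forces the bracketed sum above to be $-1$).

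The construction of $\T$ I would carry out by a sweep downward along $f$, mirroring the connected-components/cosheaf picture of de Silva et al.: process the vertices in order of decreasing $f$-value, and when a vertex $x$ is reached, add all of its incoming edges, retaining as edges of $\T$ only those needed to amalgamate the distinct connected components of the already-processed subgraph that these incoming edges meet, and declaring the remaining incoming edges of $x$ to be non-tree edges. By construction $\T$ is acyclic (an edge is discarded exactly when adding it would close a cycle), and it is spanning and connected because $\X$ is connected and every vertex and edge is eventually processed. A non-tree edge is therefore created only at a step where a cycle closes, i.e.\ precisely at a reticulation-vertex; and at a tree-vertex $x$ with $\mathbf{indeg}(x)\ge 2$ (a ``merge'' where no cycle closes) all incoming edges are retained, which is exactly the slack that makes the identity $\sum_{x\in T(\X)}(\mathbf{indeg}(x)-1)=-1$ hold despite such vertices existing.

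The main obstacle is the last combinatorial point: that the sweep retains exactly one incoming edge at each reticulation-vertex $r_i$ (hence discards exactly $d_i-1$ of them). Controlling this amounts to showing that, at the moment the sweep reaches $r_i$, all $d_i$ of its incoming edges lead into a single connected component of the already-processed part of $\X$ — which is the content that must be teased out of the structural definition of a reticulation-vertex as the bottom of a cycle, together with connectedness of $\X$. I would organize this as an induction on $m$, peeling off one reticulation at a time (deleting all its incoming edges, which keeps the graph acyclic-away-from-the-other-reticulations and drops the relevant counts by the right amount), with the sweep providing the order in which to peel; once this bookkeeping is in place the remaining steps are the Euler-characteristic computations above.
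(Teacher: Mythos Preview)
Your approach is essentially the paper's: both delete, at each reticulation $r_i$, all but one of its $d_i$ incoming edges, observe that the result is a spanning tree $\T_\X$, and then finish with Euler-characteristic bookkeeping (the paper via the indegree-sum identity applied to $\T_\X$, you via the edge count $|\E^{\T_\X}|=|\V|-1$; these are equivalent rearrangements). Where you diverge is only in effort: you build a downward sweep and propose an induction on $m$ to certify that exactly $d_i-1$ edges are discarded at each $r_i$ and that the outcome is connected and acyclic, whereas the paper simply asserts in one line that the edge-deletion ``gives back a directed tree'' and proceeds. So what you flag as ``the main obstacle'' is a point the paper does not argue at all; your extra machinery is not wrong, but it goes beyond what the paper supplies.
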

\label{prop:betti}
\begin{proof}
Thinking of $\X$ as a DAG, then if we remove for each reticulation-vertex $r_i$ all of its incident from above edges except one, from the DAG $\X$, then we get back a  directed tree $\T_\X=(\V^{\T_\X},\E^{\T_\X})$ having the same tree-vertices and their--incident from above--edges as in $\X$ plus another $m$-additional vertices, $r_1,\ldots, r_m$, that now are viewed as tree-vertices, each having indegree $1$.
This implies that $T(\V_\X)\cup\{r_1,\ldots,r_m\}=\V^{T_{\X}}$ and $\sum_{i=1}^{m}(\mathbf{indeg}_{\T_{\X}}(r_i)-1)=0$. 
We compute
\begin{align*}
s&=1+\sum_{x\in \V}(\mathbf{indeg}(x)-1)\\
&=1+\sum_{x\in T(\V)}(\mathbf{indeg}(x)-1)+\sum_{x\in R(\V)}(\mathbf{indeg}(x)-1)\\
&=1+\sum_{x\in T(\V)}(\mathbf{indeg}(x)-1)+0+\sum_{i=1}^{m}(d_i-1)\\
&=1+\sum_{x\in T(\V)}(\mathbf{indeg}(x)-1)+\sum_{i=1}^{m}(\mathbf{indeg}_{\T_{\X}}(r_i)-1)+\sum_{i=1}^{m}(d_i-1)\\
&=1+\sum_{x\in\V^{T_{\X}}}(\mathbf{indeg}(x)-1) +\sum_{i=1}^{m}(d_i-1)\\
&=s^{T_{\X}}+\sum_{i=1}^{m}(d_i-1)\\
&=0+\sum_{i=1}^{m}(d_i-1)\\
&=\sum_{i=1}^{m}(d_i-1).
\end{align*}
\end{proof}
Finally, note that although any Reeb graph can be thought of as a vertex-weighted  DAG, the other direction is not true: not every vertex-weighted DAG is a Reeb graph.
\begin{example}
\label{ex:counter}
Consider the vertex-weighted DAG, $G=(V(G),E(G),w)$,  that has edges $e_1=(x_1,x_2),e_2=(x_2,x_3)$ connecting $x_1$ with $x_2$ and $x_2$ with $x_3$ and an edge $e_3=(x_1,x_3)$ connecting directly $x_1$ with $x_3$ with a single edge, and such that $w(x_1)=2$, and $w(x_2)=w(x_3)=1$.
Then we claim that there exists no  Reeb graph $(\X,f)$ such that $V(\X)=V(G)$ and $f_{|V(G)}=w$.
Indeed assume the contrary there is one such $f$.
Then $f(x_1)=2$ and $f(x_2)=f(x_3)=1$.
By definition of $G$, there exists an edge $e_2:x_2\to x_3$.
By Remk.~\ref{rem:weighted} we get a strictly positive edge-weight $w(e_2)>0$. Howver we compute $w(e_2):=f(x_2)-f(x_3)=w(x_2)-w(x_3)=1-1=0$, a contradiction.
Hence the weighted DAG $G$ cannot be realized as a Reeb graph.
This DAG it is not consistent with `time' in the sense that the edge $e_2=(x_2,x_3)$ represents a change in the nodes from $x_2$ to $x_3$ that happen instantaneously.
In other words the edge $e_2$ is a `horizontal' edge.
Reeb graphs cannot have horizontal edges, from their construction.
\[
\begin{tikzcd}
    \& x_1\arrow[ddr]\arrow[ddl]\\
    \\
    x_2\arrow[rr]\&\& x_3
\end{tikzcd}
\]
\end{example}

\section{Classifying Reeb graphs up to isomorphism}
We 
%include the category of Reeb graphs $\mathbf{Reeb}$ into a larger category of Reeb graphs $\CC$
%$\mathbf{Reeb}^{ord}$ 
%with more objects without changing the morphisms, thus making $\mathbf{Reeb}$ a full subcategory of $\CC$.
%The goal is to 
show that inside a larger category of Reeb graphs, any Reeb graph  is  a coproduct of trees.

\subsection{Ordered Reeb graphs}
\begin{definition}
	An \textit{ordered Reeb graph} $(\X,f,\leq_\X)$ is an ordinary Reeb graph $(\X,f)$ such that its edge sets  and its vertex sets as in Defn.~\ref{def:Reeb def} are in particular partially ordered sets (posets), i.e.~$(\E_i,\leq_{\E_i})$, $i=1,\ldots,k-1$, and $(\V_i,\leq_{\V_i})$, $i=1,\ldots,k$, and also the down maps and upper maps preserve the partial orders, i.e.~$U_i:(\E_i,\leq_{\E_i})\to(\V_{i+1},\leq_{\V_{i+1}})$, and $D_i:(\E_i,\leq_{\E_i})\to(\V_i,\leq_{\V_i})$, for $i=1,\ldots,k-1$.
	The partial orders of the edge posets and vertex posets induce a partial order both on the disjoint union and the quotient space, namely the Reeb graph, denoted by $\leq_\X$.
Indeed, it is known that the category $\mathbf{Pos}$ of posets, just like sets, admit coequalizers and therefore quotients (see Joy of Cats, pg 119, \cite{adamek1990herrlich}). 
\end{definition}
\begin{definition}
	We define the category $\mathbf{Reeb}^{ord}$ whose
	\begin{itemize}
		\item objects $(\X,f,\leq_\X)$ are ordered Reeb graphs.
		\item morphisms $\varphi:(\X,f,\leq_\X)\to(\Y,g,\leq_\Y)$ are ordinary morphisms of Reeb graphs $\varphi:(\X,f)\to(\Y,g)$ that preserve the partial orders of the edge posets and vertex posets, i.e~$\varphi^{\E_i}:(\E^\X_i,\leq_{\E^\X_i})\to(\E^\Y_i,\leq_{\E^\Y_i})$, $i=1,\ldots,k-1$, and $\varphi^{\V_i}:(\V^\X_i,\leq_{\V^\X_i})\to(\V^\Y_i,\leq_{\V^\Y_i})$, $i=1,\ldots,k$, are order preserving maps. 
		\item composition is defined in the obvious way.
	\end{itemize}
\end{definition}
Any finite set $A=\{x_1,\ldots,x_n\}$ can be trivially thought of as a poset $(A,\leq)$ by considering $x_i\leq x_j\Leftrightarrow x_i=x_j$. 
Namely, the only inequalities are the identities.
Thus, any Reeb graph $(\X,f)$ has its edge sets and vertex sets trivially partially ordered, i.e.~the inequalities are only the identities. 
Hence any Reeb graph $(\X,f)$ is an object of $\mathbf{Reeb}^{ord}$. 
That is, $\Ob(\mathbf{Reeb})\subset \Ob(\mathbf{Reeb}^{ord})$.
In particular $\mathbf{Reeb}$ is a full subcategory of $\mathbf{Reeb}^{ord}$. 
Indeed, let $\varphi:(\X,f)\to (\Y,g)$ be a morphism in $\mathbf{Reeb}$.
Then, again trivially we can think of $\varphi$--when restricted to the edge sets and vertex sets respectively--as an order preserving map between  trivially partially ordered sets.
Therefore $\mathbf{Reeb}$ is a full subcategory of $\mathbf{Reeb}^{ord}$.

%Now we want to restrict on Reeb graphs with partially ordered vertex sets and same edge structure.
\subsection{Ordered Reeb graphs with a fixed edge structure}
 Fix an ordered subset $S=\{a_1<\ldots<a_k\}$ of $\R$.
Consider an ordered Reeb graph $(\X,f,\leq_\X)$ with edge posets
% (not necessary the minimal (critical) set)
$(\E,\leq_\E):=(\E_1,\leq_{\E_1})\coprod \ldots\coprod(\E_{k-1},\leq_{\E_{k-1}}),$ where $\E_i$, $i=1,\ldots,k-1$, as in Defn.~\ref{def:Reeb def}.
We call $\E_\bullet=(\E_1,\ldots,\E_{k-1})$ an \textit{edge sequence} of the ordered  Reeb graph $(\X,f)$ with respect to $S$.

Given a common set $S=\{a_1<\ldots<a_k\}$ for a pair of Reeb graphs $(\X,f,\leq_\X)$ and $(\Y,g,\leq_\Y)$, we say that their edge sequences  $\E_\bullet^\X=(\E^\X_1,\ldots,\E^\X_{k-1})$ and $\E^\Y=(\E^\Y_1,\ldots,\E^\Y_{k-1})$ are \textit{equivalent}, and denote it by $\E_\bullet^\X\cong \E^\Y_\bullet$, if $\E^\X_i\cong \E^\Y_i$, for all $i=1,\ldots,k-1$, as sets.
In other words, $(\X,f,\leq_\X)$ and $(\Y,g,\leq_\Y)$ have equivalent edge sequences if $|\E_i|=|\E'_i|$ for all $i=1,\ldots,k-1$, where $|\cdot|$ denotes the cardinality.

%\begin{definition}
%Let $(\X,f,\leq_\X)$ and $(\Y,g,\leq_\Y)$ be two ordered Reeb graphs with common set $S$.
%We say $(\X,f,\leq_\X)$, $(\Y,g,\leq_\Y)$ have the \textit{same edge structure}, if their edge sequences are equivalent. 
%Given any edge sequence $\E_\bullet$, we say that an ordered Reeb graph $(\X,f,\leq_\X)$ has \textit{same edge structure as $\E_\bullet$} if its edge sequence $\E_\bullet^\X$ is equivalent to $\E_\bullet$.
%\end{definition}

\begin{definition}
Let $(\X,f)$ and $(\Y,g)$ be two Reeb graphs with common set $S$.
We say $(\X,f)$, $(\Y,g)$ have the \textit{same edge structure}, if their edge sequences are equivalent. 
\end{definition}
Note that the relation  `$(\X,f)\sim (\Y,g)$ $\Leftrightarrow$ $(\X,f)$, $(\Y,g)$ have the same edge structure' forms an equivalence relation.
This equivalence relation induces a partition on the objects of $\mathbf{Reeb}$, i.e.~we get
$$\Ob(\mathbf{Reeb})=\coprod_{a\in A}[(\X_a,f_a)],$$
for some index set $A$, where $[]$ denotes the $\sim$-equivalence class.
This fact suggests that each of these blocks can be turned into a category on its own.

Fix an edge-sequence $\E_\bullet$.
\begin{definition}
	We define $\Reeb^{ord}[\E_\bullet]$ the category whose 
\begin{itemize}
\item  objects $(\X,f,\leq_\X,\mu^\X)$ are ordered Reeb graphs $(\X,f,\leq_\X)$ 
together with a family $\mu^{\X}=\{\mu_i^{\X}\}_{i=1}^{k-1}$ of bijections $\mu_i^{\X}:\E_{i}^{\X}\to \E_i$, for all $i=1,\ldots,k-1$, called an $\E$-\textit{edge labelling}, or simply an \textit{edge labelling} if $\E_\bullet$ is given.
\item the morphisms are ordinary morphisms $\varphi:(\X,f,\leq_\X)\to(\Y,g,\leq_\Y)$ in $\Reeb^{ord}$ that preserve the edge-labeling, namely $\mu^{\Y}( \varphi_j^\E(e))=\mu^{\X}(e)$ for all $e\in \E_j^{\X}$ and all $j=1,\ldots,k-1$.
\item composition is defined in the obvious way.
\end{itemize}
\end{definition}
%It is straightforward to check that this is indeed a category on its own. 
\begin{lemma}
The category $\mathbf{Reeb}^{ord}[\E_\bullet]$ is thin.
\end{lemma}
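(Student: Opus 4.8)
The plan is to show that between any two objects of $\mathbf{Reeb}^{ord}[\E_\bullet]$ there is at most one morphism, by arguing that a morphism in this category is forced on edges and then forced on vertices. Fix two objects $(\X,f,\leq_\X,\mu^\X)$ and $(\Y,g,\leq_\Y,\mu^\Y)$ over the common set $S=\{a_1<\ldots<a_k\}$, and suppose $\varphi,\psi:(\X,f,\leq_\X,\mu^\X)\to(\Y,g,\leq_\Y,\mu^\Y)$ are two morphisms. By Prop.~\ref{prop:consistency} each of $\varphi,\psi$ is determined by its vertex components $\varphi_i^\V,\psi_i^\V:\V_i^\X\to\V_i^\Y$ and edge components $\varphi_i^\E,\psi_i^\E:\E_i^\X\to\E_i^\Y$, so it suffices to show $\varphi_i^\V=\psi_i^\V$ and $\varphi_i^\E=\psi_i^\E$ for all $i$.

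First I would pin down the edge components. By definition of $\mathbf{Reeb}^{ord}[\E_\bullet]$, the edge-labelling bijections satisfy $\mu_i^\Y(\varphi_i^\E(e))=\mu_i^\X(e)=\mu_i^\Y(\psi_i^\E(e))$ for every $e\in\E_i^\X$ and every $i=1,\ldots,k-1$. Since $\mu_i^\Y:\E_i^\Y\to\E_i$ is a bijection, it is injective, and hence $\varphi_i^\E(e)=\psi_i^\E(e)$. Thus $\varphi_i^\E=\psi_i^\E$ for all $i$; in fact both are equal to the forced bijection $(\mu_i^\Y)^{-1}\circ\mu_i^\X$.

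Next I would deduce that the vertex components agree, using the consistency conditions. For $1\le i\le k-1$ the conditions give $\varphi_i^\V\circ D_i^\X=D_i^\Y\circ\varphi_i^\E$ and $\varphi_{i+1}^\V\circ U_i^\X=U_i^\Y\circ\varphi_i^\E$, and likewise for $\psi$. Since $\varphi_i^\E=\psi_i^\E$, the right-hand sides coincide, so $\varphi_i^\V$ and $\psi_i^\V$ agree on the image $D_i^\X(\E_i^\X)\subseteq\V_i^\X$, and $\varphi_{i+1}^\V$ and $\psi_{i+1}^\V$ agree on $U_i^\X(\E_i^\X)\subseteq\V_{i+1}^\X$. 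By Defn.~\ref{def:Reeb def} the sets $\E_i$ and $\V_i$ are non-empty, and — this is the point I would check carefully — every vertex of a Reeb graph lies in the image of some down map or some upper map: a vertex $v\in\V_i^\X$ that were in neither $D_i^\X(\E_i^\X)$ (for $i\le k-1$) nor $U_{i-1}^\X(\E_{i-1}^\X)$ (for $i\ge 2$) would be an isolated point of $\X$, contradicting connectedness (for $i=1$ one uses $D_1^\X$ alone, for $i=k$ one uses $U_{k-1}^\X$ alone, and these images are non-empty since $\E_1^\X,\E_{k-1}^\X\ne\varnothing$). Hence $\V_i^\X=D_i^\X(\E_i^\X)\cup U_{i-1}^\X(\E_{i-1}^\X)$ for all $i$, with the convention that the missing term is dropped at the two ends, and on each piece $\varphi_i^\V$ and $\psi_i^\V$ have already been shown to agree. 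Therefore $\varphi_i^\V=\psi_i^\V$ for all $i$, so $\varphi=\psi$, and $\mathbf{Reeb}^{ord}[\E_\bullet]$ is thin.

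The main obstacle is the last step: one must be sure that fixing the morphism on all edges really does fix it on all vertices, i.e.\ that there are no vertices "invisible" to the edge maps. This is exactly where connectedness of a Reeb graph (built into Defn.~\ref{def:Reeb def}) is used, together with non-emptiness of the $\E_i$'s; the order-preservation hypotheses play no role in the uniqueness argument and are only there so that the category is well defined. A clean way to package the covering claim $\V_i=\operatorname{im}D_i\cup\operatorname{im}U_{i-1}$ is to observe that any vertex missed by both maps would form its own connected component in the quotient $\U/\!\sim$ of Defn.~\ref{def:Reeb def}, which is impossible.
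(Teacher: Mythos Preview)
Your proof is correct and follows essentially the same route as the paper: first force the edge components to be $(\mu^\Y)^{-1}\circ\mu^\X$ using the edge-labelling bijections, then use the consistency conditions of Prop.~\ref{prop:consistency} to force the vertex components. The paper simply asserts that every vertex is the $D$- or $U$-image of some edge, whereas you justify this via connectedness; your extra care on that point is a mild improvement in rigor but not a different strategy.
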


\begin{proof}
Let $\varphi,\psi:(\X,f,\leq_\X,\mu^\X)\to(\Y,g,\leq_\Y,\mu^\Y)$ be two morphisms in $\mathbf{Reeb}^{ord}[\E_\bullet]$.
By Prop.~\ref{prop:consistency} we have to show that the maps agree at the edge posets and vertex posets.
By definition $\varphi=\psi=(\mu^{\Y})^{-1}\circ(\mu^{\X})$ when restricted to edge posets, because the labellings are bijections.

Let $v$ be a node in $\X$, say $v\in \V_i^{\X}$ for some $i$.
We claim that $\varphi_i^{\V}(v)=\psi_i^{\V}(v)$.
Now, $v$ is either the down image or the upper image of some edge $e\in\E_i^{\X}$, for some $i=1,\ldots,k-1$.
So, we have two cases:
\begin{itemize}
    \item Case 1: $v=D_i^{\X}(e)$.
    By Prop.~\ref{prop:consistency} we have that
    \begin{align*}
        \varphi(v)&=\varphi_i^{\V}(D_i^{\X}(e))\\
        &=D_i^{\Y}(\varphi_i^{\E}(e))\\
        &=D_i^{\Y}((\mu^{\Y})^{-1}(\mu^{\X}(e)))\\
        &=D_i^{\Y}(\psi_i^{\E}(e))\\
        &=\psi_i^{\V}(D_i^{\X}(e))\\
        &=\psi(v).
    \end{align*}
    \item Case 2: $v=U_{i}^{\X}(e)$.
    The proof of case 2 is similar to that of Case 1 and is omitted.
\end{itemize}
\end{proof}

\begin{remark}
\label{rem:partition}
We define the full subcategory $\Reeb[\E_\bullet]$ of $\Reeb^{ord}[\E_\bullet]$ whose objects are Reeb graphs with the trivial partial order (i.e.~ordinary Reeb graphs) and with the same edge structure as $\E_\bullet$.
By definition of $\sim$ we observe that $(\X,f)$ and $(\Y,g)$ have the same edge structure if and only if both $(\X,f)$ and $(\Y,g)$ are objects of $\mathbf{\Reeb}[\E_\bullet]$, for some edge-sequence $\E_\bullet$. 
\end{remark}

\subsection{Tree-decomposition}
\begin{theorem}
\label{thm:ClassReeb}
Fix an edge-sequence $\E_\bullet$.
Let $n,s\geq0$.
Inside the larger category $\Reeb^{ord}[\E_\bullet]$, any Reeb graph $(\X,f,\mu)$ in $\Reeb[\E_\bullet]$, 
with $n$ leaves and Betti number $B_1(\X)=s$, is a coproduct of ordered trees with $(n+s)$-leaves and same $\E$-edge labelling, i.e.
$$(\X,f,\mu)\cong\coprod_{\T\in\TT(\X)}(\T,f_\T,\leq_{\T},\mu),$$
for some set of ordered trees $\TT(\X)$.
\end{theorem}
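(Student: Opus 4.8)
The plan is to use that $\Reeb^{ord}[\E_\bullet]$ is thin (the preceding Lemma), so a coproduct there is simply a least upper bound: the asserted isomorphism $(\X,f,\mu)\cong\coprod_{\T\in\TT(\X)}(\T,f_\T,\leq_\T,\mu)$ will follow once I exhibit morphisms $\iota_\T\colon\T\to\X$ for each tree $\T\in\TT(\X)$ and show that any object $Y$ receiving a morphism from every $\T$ also receives one from $\X$ (commutativity of the coprojection triangles being automatic in a thin category). First I would fix a combinatorial model of the category: by Prop.~\ref{prop:consistency} together with edge-label preservation, for each level $i$ the $i$-th vertex poset of an object $Y$ is a quotient of the fixed set $P_i$ of ``edge-ends over $a_i$'', where $P_1=\E_1$, $P_k=\E_{k-1}$, $P_i=\E_{i-1}\sqcup\E_i$ for $1<i<k$, the quotient map being $U_{i-1}$ on $\E_{i-1}$ and $D_i$ on $\E_i$; thus $Y$ at level $i$ is recorded by a partition $\pi_i^{Y}$ of $P_i$ together with a partial order on its blocks. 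Unravelling the consistency conditions and order preservation shows that a morphism $\X\to Y$ exists iff $\pi_i^{\X}$ refines $\pi_i^{Y}$ for every $i$ and the induced quotient of vertex posets is monotone; since $\X$ lies in $\Reeb[\E_\bullet]$ and hence carries the trivial order, the monotonicity clause is vacuous, so $\X\leq Y$ simply means ``$\pi_i^{\X}$ refines $\pi_i^{Y}$ for all $i$''.

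Next I would build $\TT(\X)$. Let $r_1,\dots,r_m$ be the reticulation vertices, $r_j$ over $a_{i(j)}$ with incoming edges $e^j_1,\dots,e^j_{d_j}$ (the $e\in\E_{i(j)}$ with $D_{i(j)}(e)=r_j$, $d_j\geq2$) and outgoing edges $g^j_1,\dots,g^j_{p_j}$; recall $s=\sum_j(d_j-1)$ by Prop.~\ref{prop:betti}. For a sign vector $\sigma\in\{+,-\}^m$ let $\T_\sigma$ be the Reeb graph obtained from $\X$ by refining, at each $r_j$, the block of $r_j$ in $P_{i(j)}$ into one block $\{g^j_1,\dots,g^j_{p_j},e^j_1\}$ and $d_j-1$ singleton blocks $\{e^j_2\},\dots,\{e^j_{d_j}\}$, leaving all other levels alone. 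Topologically every vertex of $\T_\sigma$ now has indegree $\leq1$, so $\T_\sigma$ has Betti number $0$, i.e.\ it is a tree; it keeps the labelling $\mu$ unchanged; and since splitting only creates the indicated $d_j-1$ new leaves over $a_{i(j)}$ and turns no former leaf of $\X$ (possibly a reticulation) into a non-leaf, $\T_\sigma$ has $n+\sum_j(d_j-1)=n+s$ leaves. On the $i(j)$-th vertex poset of $\T_\sigma$ I would put the total order $e^j_1<e^j_2<\dots<e^j_{d_j}$ on the split fibre over $r_j$ if $\sigma_j=+$ (its reverse if $\sigma_j=-$), identifying a split vertex with the unique incoming edge it carries, and the trivial order elsewhere; taking the edge posets trivial makes $D$ and $U$ automatically monotone, so $\T_\sigma$ is a genuine object. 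I would set $\TT(\X)=\{\T_\sigma:\sigma\in\{+,-\}^m\}$ — at most $2^m\leq 2^s$ trees, and in fact $\T_{+\cdots+}$ and $\T_{-\cdots-}$ alone already suffice for the argument below.

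Then I would verify the two halves. For the coprojections: $\pi_i^{\T_\sigma}$ refines $\pi_i^{\X}$ since $\T_\sigma$ is obtained by splitting blocks, and the induced map $\V_i^{\T_\sigma}\to\V_i^{\X}$ is monotone because every nontrivial relation in $\T_\sigma$ lies within a single split fibre, which collapses to one (trivially ordered) vertex of $\X$; hence $\iota_\T\colon\T\to\X$ exists. For the universal property, assume $Y$ admits a morphism from each $\T_\sigma$; I must show $\pi_i^{\X}$ refines $\pi_i^{Y}$, i.e.\ every block $B$ of $\pi_i^{\X}$ lies in one block of $\pi_i^{Y}$. If $B$ is the block of a tree-vertex (including one of indegree $\geq2$ that closes no cycle), then $B$ is already a block of $\pi_i^{\T_\sigma}$, hence contained in a $Y$-block. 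If $B$ is the block of a reticulation $r_j$ with $p_j\geq1$, then from one $\T_\sigma$ the block $\{g^j_\bullet,e^j_1\}$ lands in a single $Y$-block $[r_j]_Y$, while monotonicity of the quotient $\V_{i(j)}^{\T_{+\cdots+}}\to\V_{i(j)}^{Y}$ forces $[e^j_1]_Y\leq\dots\leq[e^j_{d_j}]_Y$ and $\V_{i(j)}^{\T_{-\cdots-}}\to\V_{i(j)}^{Y}$ forces the reverse chain, so by antisymmetry of the order on $\V_{i(j)}^{Y}$ all $[e^j_t]_Y$ coincide, and coincide with $[r_j]_Y\ni g^j_\bullet$; thus $B\subseteq[r_j]_Y$. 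If $B$ is the block of a leaf-reticulation ($p_j=0$), the same antisymmetry argument with $B=\{e^j_1,\dots,e^j_{d_j}\}$ puts all of $B$ in one $Y$-block. Hence $\X\leq Y$, and by thinness the unique morphism $\X\to Y$ is the factoring map; so $(\X,f,\mu)$ together with the $\iota_\T$ is the coproduct, which is the theorem.

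The step I expect to be the real obstacle is the leaf-reticulation case of the universal property: when $p_j=0$ there is no outgoing edge to chain the incoming edges $e^j_t$ together through transitivity of refinement, so over plain (unordered) Reeb graphs the coproduct of the cut-trees would be strictly finer than $\X$ and the statement would fail — this is exactly why one must pass to $\Reeb^{ord}[\E_\bullet]$, and why the definition of ``leaf'' is arranged so that a leaf-reticulation stays a leaf after cutting. Making the fibre-orders on the various $\T_\sigma$ mutually consistent across all reticulations simultaneously (so that $D$, $U$ stay monotone and the coprojections to the trivially ordered $\X$ survive) is the delicate part; everything else is bookkeeping with partitions.
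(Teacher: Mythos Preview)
Your argument is correct and genuinely different from the paper's. The paper builds $\TT(\X)$ by varying, at each reticulation $r_i$ of indegree $d_i$, \emph{which} incoming edge stays attached: for every $w\in\prod_{i}\{1,\dots,d_i\}$ it forms a tree $\T_w$ (so $|\TT(\X)|=\prod_i d_i$) and equips the split fibre with the ``fan'' order $r_i<e^{(i)}_t$ for $t\neq w_i$. The universal property is then obtained by letting $w_i$ range: from $\T_w$ one gets $D^{\Y}\varphi^\E(e^{(i)}_{w_i})\le D^{\Y}\varphi^\E(e^{(i)}_t)$, and swapping the roles of $w_i$ and $t$ yields the reverse inequality, hence equality. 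You instead fix the underlying cut once (always keep $e^j_1$) and vary only the \emph{order} on the split fibre, using a total chain and its reverse; antisymmetry in $\V^{\Y}$ then forces all images to coincide. Both arguments exploit antisymmetry in $\Y$, but yours does so with far fewer cofactors --- indeed your observation that $\T_{+\cdots+}$ and $\T_{-\cdots-}$ alone suffice is sharper than the paper's $\prod_i d_i\le 2^s$ bound. Your partition model of $\Reeb^{ord}[\E_\bullet]$ (objects as partitions of the fixed edge-end sets $P_i$, morphisms as refinements with monotone quotient) also makes the thin structure and the least-upper-bound reformulation more transparent than the paper's direct verification via Prop.~\ref{prop:consistency}.

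One small slip: your sentence ``every vertex of $\T_\sigma$ now has indegree $\le 1$'' is not true in general, since tree-vertices of $\X$ with $\mathbf{indeg}\ge 2$ (those that close no cycle, the third bullet in the paper's trichotomy) are left untouched by your splitting. The conclusion that $\T_\sigma$ has Betti number $0$ is still correct: the edge count is unchanged while the vertex count grows by $\sum_j(d_j-1)=s$ (Prop.~\ref{prop:betti}), and connectivity is preserved because deleting the $s$ new leaves together with their pendant edges returns exactly the spanning tree of $\X$ used in the proof of Prop.~\ref{prop:betti}; Euler's formula then gives $B_1(\T_\sigma)=s-s=0$. Replace the indegree claim by this count and the proof goes through.
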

\begin{proof}
Let $\X=(\X,f,\mu)$ be a Reeb graph in $\Reeb[\E_\bullet]$ with $n$-leaves and $B_1(\X)=s$.
Let $\leq_\X$ be its trivial partial order.
Let $L(\X)=\{l_1,\ldots,l_t\}$, $R(\X)=\{r_1,\ldots,r_m\}$ be its sets of leaves, and reticulation-vertices respectively.
Let $d_i$ be the indegree of the reticulation-vertex $r_i$ for all $i=1,\ldots,m$.
Also for any reticulation-vertex  $r_i$, $i=1,\ldots,m$, let us denote by $\{e_{1}^{(i)},\ldots,e_{d_{i}}^{(i)}\}$ the set of all edges that are incident of $r_i$ from above it.
The basic idea of the proof is to construct a collection of ordered trees with $(n+s)$-leaves, and same edge labelling as $\X$, out of $\X$, 
$$\mathcal{T}(\X)=\{(\T_{(w_1,\ldots,w_m)},f_{(w_1,\ldots,w_m)},\leq_{(w_1,\ldots,w_m)},\mu):1\leq w_i\leq d_i\text{, for all }i=1,\ldots,m\}$$ 
 by breaking up in all possible ways the reticulation-vertices (the bottom of the cycles) of $\X$--without changing the connectivity of the graph--in order to create a tree out of $\X$, by introducing new leaves.   
Consider $\V_j^\X$ and $\E_j^\X$ and the upper and down maps  $U_j^{\X}$ and $D_j^{\X}$, $j=1,\ldots,k-1$ as in Defn.~\ref{def:Reeb def}.
For simplicity of the proof, we denote any $m$-tuple $(w_1,\ldots,w_m)$ by $w$.

Let $w\in \prod_{i=1}^{m}\{1,\ldots,d_i\}$.
We construct an ordered tree $\T=(\T_{w},f_{w},\leq_w,\mu)$ with $(n+s)$-leaves, by changing the structure of $\X$ so as to make an odered tree following the steps below:

\vspace{1em}
\noindent 1. We define 
\begin{itemize}
\item for any $j=1,\ldots,k-1$, the set $\E^{\T_w}_j:=\E^\X_j$ equipped with the trivial partial order $\leq_{\E_j^\X}$. 
That way, the  edge sequence (and thus the edge structure) remains the same, i.e.~$\E^{\T_w}_j=\E^\X_j$, for all $j=1,\ldots,k-1$.
Moreover consider the same $\E$-edge labelling $\mu^{\T_w}:=\mu$, as for $\X$.
\item for any $j=1,\ldots,k$, the vertex poset $(\V^{\T_w}_j,\leq_{\V^{\T_w}_j})$, where\footnote{if $f(r_i)\neq a_j$, for all $j$, then the right component of the direct sum is the emptyset and we get $\V^{\T_w}_j=\V^\X_j$.}
    $$\V^{\T_w}_j:=\V^\X_j\coprod\biggl(\coprod_{f(r_i)=a_j}\{e^{(i)}_t:1\leq t\leq d_i\text{ and }t\neq w_i\}\biggl),$$
and the partial order $\leq_{\V^{\T_w}_j}$ is given by the identities on the elements, and the additional formal inequalities $r_i<_w e^{(i)}_t$, for all  $1\leq t\leq d_i$ with $t\neq w_i$.
\begin{remark}
These nontrivial innequalities formalize the idea that the vertex $r_i$ is isolated from the `new vertices' (leaf nodes) that are formed  after cutting the reticulation vertex $r_i$ of $\X$ while keeping it connected from above with the edge $e_{w_i}^{(i)}$.
This observation is crucial since, on one hand the trees $\T_w$ are distinguised for all choces of $w$, and on the other hand this makes the coproduct well defined as we will see, e.g.~in Ex.~\ref{ex:3}.
\end{remark}

    \item the function $D_j^{\T_w}:\E_j^{\T_w}\to \V_j^{\T_w}$ given by
    \[
    D_j^{\T_w}(e)=
    \begin{cases}
      D^\X_j(e), & \text{ if } D^\X_j(e)\text{ is not a reticulation-vertex} \\
      r_i, & \text{ if }e=e_{w_i}^{(i)}\text{, for some }i=1,\ldots,m,\\
      e, &\text{ if }D^\X_j(e)=r_i\text{ and }e\neq e_{w_i}^{(i)}\text{, for all }i=1,\ldots,m.
       \end{cases}
   \]
    \item the function   $U_j^{\T_w}:\E_j^{\T_w}\to\V^{\T}_{j+1}$  given by 
    \end{itemize}
    \[
    U_j^{\T}(e)=U^\X_j(e).
   \]
Note that since the edge sets are trivially partially ordered, as such, these functions are trivially order preserving, i.e.~$D_j^{\T_w}:(\E_j^{\T_w},\leq_{\E_j^{\T_w}})\to (\V_j^{\T_w},\leq_{\V^{\T_w}_j})$ and  $U_j^{\T_w}:(\E_j^{\T_w},\leq_{\E_j^{\T_w}})\to(\V^{\T}_{j+1},\leq_{\V^{\T_w}_{j+1}})$ for all $j$.

\vspace{1em}
\noindent 2. $\T_{w}$ is the quotient of the disjoint union 
$$\biggl(\coprod_{j=1}^{k}\V_j^{\T_w}\times\{a_j\}\biggl)\coprod\biggl(\coprod_{j=1}^{k-1}(\E_j^{\T_w}\times [a_i,a_{i+1}]\biggl)$$
with respect to the identifications $(U_j^{\T_w}(e),a_{j+1})\sim (e,a_{j+1})$ and $(D_j^{\T_w}(e),a_{j})\sim (e,a_j)$.
 Define $f_w$ to be the projection of $\T_w$ to the second coordinate. 
Both the disjoint union and the resulting quotient space receive a partial order from the partial orders on  $\V_j^{\T_w}$ and $\E_j^{\T_w}$.
We denote the resulting partial order on $\T_w$ by $\leq_{\T_w}$.
%The only non-trivial innequalities yielded by the disjoint union and the quotient space  are, respectively, $(r_i,a_j) \leq (e^{(i)}_t,a_j)$  and $[(r_i,a_j)]\leq [(e^{(i)}_t,a_j)]$, for all  $t=1,\ldots,w_i-1,w_i+1,\ldots,d_i$, where $a_j=f(r_i)$, for all $i=1,\ldots,m$. 
\begin{remark}
Note also that, by definition of $D^{\T_w}$, the identifications  $(D_j^{\T_w}(e),a_{j})\sim (e,a_j)$ are the trivial ones $(e,a_j)\sim (e,a_j)$ whenever  $e\neq e_{w_i}^{(i)}$, for all $i=1,\ldots,m$. 
That formally expresses the fact that the tree $\T_w$ is constructed from $\X$ by cutting the bottom of all cycles, keeping, for all $1\leq i\leq m$, only the edge $e^{(i)}_{w_i}$ connected to the reticulation-vertex $r_i$ and  diconnecting the others. 
\end{remark}

\vspace{1em}
\noindent To sum up, $\T_w$ forms an an ordered  tree $(\T_{w},f_w,\leq_w,\mu)$  in $\mathbf{Reeb}^{ord}[\E_\bullet]$.

\vspace{1em}
\noindent Furthermore, each ordered  tree $(\T_{w},f_w,\leq_{w},\mu)$ is equipped with the obvious quotient map
\begin{align*}
 q_{w}:(\T_{w},f_w,\leq_w,\mu)&\twoheadrightarrow(\X,f,\leq_\X,\mu)\text{, where }\\
q_{w}^{\E}:\E^{\T_{w}}&\to\E^{\X}\\
 e&\mapsto e\text{, and }\\
 q_{w}^{\V}:\V^{\T_{w}}&\to\V^{\X}\\
   v&\mapsto v\text{, for all }v\in\V^\X\\
    e_t^{(i)}&\mapsto r_i \text{, for all }1\leq t\leq d_i\text{ with }t\neq w_i\text{, for all }i=1,\ldots,m.
\end{align*}

By definition, the quotient map $q_w$ glues back the new vertices of the tree $\T_w$--yielded by the $w$-cut--to form the original Reeb graph $\X$.
Note that no matter what the order of the new vertices is, from the cut of $r_i$,  via the quotient  map $q_w$ they are glued to a single vertex $r_i$, hence $q_w$ trivially preserves the partial orders. 
Namely, $q_w$ kills the only non-trivial innequalities $r_i\leq e^{(i)}_t$, for all $t\neq w_i$, and $i=1,\ldots,m$.
Moreover, by construction, $\T_w$ has the same edge sets as $\X$, thus making $q_w$ to be trivially $\E$-labelling preserving.
So indeed $q_w$ is a morphism  in $\Reeb^{ord}[\E_\bullet]$.

\vspace{1em}
\noindent Now we claim that the coproduct of all the trees $(\T_w,f_w,\leq_w,\mu)$ in $\mathbf{Reeb}^{ord}[\E_\bullet]$ is isomorphic to the ordered Reeb graph $(\X,f,\leq_\X,\mu)$ (where, again, $\leq_\X$ is the trivial order of the Reeb graph) with the coprojections morphisms being the quotient maps  $q_w$.

Let $\varphi_{w}:(\T_{w},f_{w},\leq_w,\mu)\to(\Y,g,\leq_\Y,\mu^\Y)$ be any $w$-indexed family of morphisms in $\mathbf{Reeb}^{ord}[\E_\bullet]$.
Then, in particular
\begin{align*}
\varphi_{w}:(\T_{w},f_{w},\leq_w)&\to(\Y,g,\leq_\Y)\\
   [(v,t)]&\mapsto [(\varphi_{w,j}^{\V}(v),t)]\text{, for all  }v\in \V_j^\X\\
 [(e,t)]&\mapsto [(\varphi_{w,j}^{\E}(e),t)]\text{, for all  }e\in \E_j^\X,
\end{align*}
for some order preserving maps $\varphi_{w,j}^{\V}$ and $\varphi_{w,j}^{\E}$ satisfying  the consistency conditions as described in Prop.~\ref{prop:consistency}.
We claim that there exists a unique morphism $\varphi:(\X,f,\leq_\X,\mu)\to(\Y,g,\leq_\Y,\mu_\Y)$ in $\mathbf{Reeb}^{ord}[\E]$ such that  each of the diagrams
\[
\begin{tikzcd}
(\T_{w},f_{w},\leq_w,\mu)\arrow[rrr,two heads, "{q_{w}}"] \arrow[rrrdd,swap,"{\varphi_{w}}"]\&\&\& (\X,f,\leq_\X,\mu)\arrow[dd, dashed,"{\varphi}"]
\\
\\
\&\&\& (\Y,g,\leq_\Y,\mu^\Y)
\end{tikzcd}
\]
commutes.
Since the category $\mathbf{Reeb}^{ord}[\E_\bullet]$ is thin, all diagrams commute by default and if $\varphi$ exists then it is unique. 
Therefore, we only need  to show that there exists a morphism
\begin{align*}
\varphi:(\X,f,\leq_\X,\mu)&\to(\Y,g,\leq_\Y,\mu^\Y)\\
   [(v,t)]&\mapsto [(\varphi_{j}^{\V}(v),t)]\text{, for all  }v\in \V_j^\X\\
 [(e,t)]&\mapsto [(\varphi_{j}^{\E}(e),t)]\text{, for all  }e\in \E_j^\X,
\end{align*}
for some order preserving maps $\varphi_{j}^{\V}$ and $\varphi_{j}^{\E}$ satisfying the consistency conditions as described in Prop.~\ref{prop:consistency}.

Lets return to $\varphi_w$.
Each map $\varphi_{w,j}^{\E}$ is trivially order preserving; the edge posets are just edge sets, and therefore they have the trivial partial order given only by the identities.
Since $\varphi_w$ is $\E$-edge labelling preserving, we get $\varphi_{w,j}^{\E}(e)=(\mu^{\Y})^{-1}\mu(e)$, for all $e\in\E_j^{\T_w}$.
Thus $\varphi_{w,j}^{\E}$ it is independent of $w$.
So, naturally, we define $\varphi^{\E}_j$ by
$$\varphi^{\E}_j(e)=\varphi_{w,j}^{\E}(e)=(\mu^{\Y})^{-1}\mu(e)\text{, for all }e\in\E_j^{\X}=\E_j^{\T_w}.$$

Now lets focus on the order preserving maps $\varphi_{w,j}^{\V}:(\V_j^{\T_w},\leq_{\V_j^{\T_w}})\to(\V_j^\Y,\leq_{\V_j^\Y})$, for $j=1,\ldots,k$.
We compute 
\begin{align*}
\varphi_{w,j}^{\V}(r_i)&=\varphi_{w,j}^{\V}(D^{\T_w}_j(e_{w_i}^{(i)}))\\
&=D^{\Y}_j\varphi_{w,j}^{\E}(e_{w_i}^{(i)})\\
&=D^{\Y}_j\varphi_{j}^{\E}(e_{w_i}^{(i)}),
\end{align*}
 and
\begin{align*}
\varphi_{w,j}^{\V}(e^{(i)}_t)&=\varphi_{w,j}^{\V}(D^{\T_w}_j(e^{(i)}_t))\\
&=D^\Y_j\varphi_{w,j}^{\E}(e^{(i)}_t)\\
&=D^\Y_j\varphi_{j}^{\E}(e^{(i)}_t),
\end{align*}
for all $1\leq t\leq d_i$ with $t\neq w_i$.

On the other hand, the inequalities $r_i\leq_{\V_j^{T_w}} e_t^{(i)}$ yield the innequalities $\varphi_{w,j}^{\V}(r_i)\leq_{\V_j^\Y}\varphi_{w,j}^{\V}(e^{(i)}_t)$.
Therefore 
$$D^{\Y}_j\varphi_{j}^{\E}(e_{w_i}^{(i)})\leq_{\V_j^\Y} D^\Y_j\varphi_{j}^{\E}(e^{(i)}_t).$$
However note that since $w_i$ was chosen in random, we can as well get the inequality
\begin{equation*}
D^\Y_j\varphi_{j}^{\E}(e^{(i)}_t)\leq_{\V_j^\Y} D^{\Y}_j\varphi_{j}^{\E}(e_{w_i}^{(i)}),
\end{equation*}
by formally replacing $w_i$ with $t$.
Therefore we obtain the equalities
\begin{equation}
\label{eqn:independence}
D^\Y_j\varphi_{j}^{\E}(e^{(i)}_t)= D^{\Y}_j\varphi_{j}^{\E}(e_{s}^{(i)})\text{, for all }1\leq t,s\leq d_i\text{, for all }i=1,\ldots,m. 
\end{equation}
Thus we see that
\begin{equation}
\label{eqn:equality}
\varphi_{w,j}^{\V}(r_i)=\varphi_{w,j}^{\V}(e^{(i)}_t)\text{, for all }1\leq t\leq d_i\text{ with }t\neq w_i,
\end{equation}
and, in particular by Eqn.~\ref{eqn:independence}, we see that this is independent of $w$. 
So, naturally, setting $\varphi_{j}^{\V}(r_i):=\varphi_{w,j}^{\V}(r_i)$ is well defined.

Now, for any vertex $v\in \V_j^\X$ which is not a reticulation-vertex, by construction of $\T_w$, we have $v\in \V_j^{\T_w}$.
In particular, $v \in\V_j^{\T_w}$ is either the down image of some $e\in \E_j^{\T_w}$ (via the map $D^{\T_w}$) or the upper  image of some $e\in \E_{j-1}^{\T_w}$ (via the map $U^{\T_w}$).
Because of that, we can, again show that $\varphi_{w,j}^{\V}(v)$ is independent of the choice of $w$.
So we define $\varphi^{\V}_j(v):=\varphi_{w,j}^\V(v)$.
To sum up, $\varphi^{\V}_j(v):=\varphi_{w,j}^\V(v)$  is a well defined map, for all $v\in \V_j^{\X}$.

We claim that the maps $\varphi_j^{\V}$ and $\varphi_j^{\E}$ satisfy the consistency conditions  as described in Prop.~\ref{prop:consistency}.
We only need to check the consisteny condition $\varphi_j^{\V}D^{\X}_j=D^{\Y}_j\varphi_j^{\E}$.
The other one is true since $U^{\T_w}_j(e)=U^{\X}_j(e)$ for all $e\in \E_{j-1}$.
If we show that $\varphi_j^{\V}D^{\X}_j=D^{\Y}\varphi_j^{\E}$, then the map $\varphi:(\X,f)\to(\X,g)$ would be an actual morphism in $\Reeb$.

Indeed, fix any $w$. 
Let $e\in \E_j^{\T_w}$.
We have the following cases:
\begin{itemize}
\item \textbf{Case 1, $D^\X_j(e)$ is not a reticulation-vertex:} We compute
\begin{align*}
\varphi_j^{\V}D^{\X}_j(e)&=\varphi_{w,j}^{\V}D^{\T_w}_j(e)\text{ (by definition of }D^{\T_w}_j\text{ and  because }\E_j^\X=\E_j^{\T_w})\\
&=D^{\Y}\varphi_{w,j}^{\E}(e)\text{ (by consistency of }\varphi_{w,j}^{\E})\\
&=D^{\Y}\varphi_j^{\E}(e).
\end{align*}
\item \textbf{Case 2, $D^\X_j(e)=r_i$ and $e=e_{w_i}^{(i)}$:}
We compute
\begin{align*}
\varphi_j^{\V}D^{\X}_j(e_{w_i}^{(i)})&=\varphi_j^{\V}(r_i)\\
&=\varphi_{w,j}^{\V}(r_i)\\
&=\varphi_{w,j}^{\V}(D^{\T_w}_j(e_{w_i}^{(i)}))\text{ (by definition of }D^{\T_w}_j)\\
&=D^{\Y}\varphi_{w,j}^{\E}(e_{w_i}^{(i)})\text{ (by consistency of }\varphi_{w,j}^{\E})\\
&=D^{\Y}\varphi_{j}^{\E}(e_{w_i}^{(i)}).
\end{align*}
\item \textbf{Case 3, $D^\X_j(e)=r_i$ and $e\neq e_{w_i}^{(i)}$:}
We compute
\begin{align*}
\varphi_j^{\V}D^{\X}_j(e)&=\varphi_j^{\V}(r_i)\\
&=\varphi_{w,j}^{\V}(r_i)\\
&=\varphi_{w,j}^{\V}(e)\text{ (by Eqn.~(\ref{eqn:equality}))}\\
&=\varphi_{w,j}^{\V}D^{\T_w}_j(e)\text{ (by definition of }D^{\T_w}_j)\\
&=D^{\Y}\varphi_{w,j}^{\E}(e)\text{ (by consistency of }\varphi_{w,j}^{\E})\\
&=D^{\Y}\varphi_{j}^{\E}(e).
\end{align*}
\end{itemize}
Hence there exists a  map $\varphi:(\X,f)\to(\X,g)$ in $\Reeb$.
Note that since $\X=(\X,f)$ is trivially partially ordered, then $\varphi_w$ is trivially order preserving.
Finally, by definition, we have $\varphi_j^{\E}=(\mu^\Y)^{-1}\circ\mu$.
Hence $\varphi$ is $\E$-edge labelling preserving.
Therefore $\varphi$ is a morphism in $\Reeb^{ord}[\E_\bullet]$.

\begin{remark}
\label{rem:remark}
By definition of $\T_w$, each of the leaves of $\T_w$ that have been created after the $w$-cut is identified with an edge 
that is incident from above of a reticulation-vertex but is not equal to $e^{(i)}_{w_i}$ for all $i=1,\ldots,m$.
Therefore the cardinality of the set of the leaves that are additional to the $n$-leaves of $\X$ is exactly
$$\sum_{i=1}^{m}(d_i-1),$$
which is equal to the first Betti number $B_1=s$ of $\X$, because of Prop.~\ref{prop:betti}.
Hence, each  tree $\T_{w}$ has $(n+s)$-leaves.
\end{remark}
\end{proof}

%\begin{remark}
%It is important to note that the numbers $n,s$ that apear in Thm.~\ref{thm:ClassReeb} actually count the number of intervals in the $0$-dimensional, and $1$-dimensional barcodes, respectively.
%\end{remark}

\begin{example}
\label{ex:2}
Consider a  Reeb graph $\X=(\X,f)$ with four leaves and one cycle, and edges labelled by $e_1,e_2,\ldots,e_8$, as shown in  Fig.~2.
By applying Thm.~\ref{thm:ClassReeb} we get two trees with $5$-leaves in the tree-decomposition of $\X$.
For each of the trees, the one additional leaf to the four leaves of $\X$ corresponds to one of the two  edges $e_5$ and $e_6$, respectively.
Thus there are exactly two such trees with $5$-leaves.
\begin{figure}[h!]
\label{fig:simple}
  \begin{center}
      \includegraphics[width=0.9\textwidth]{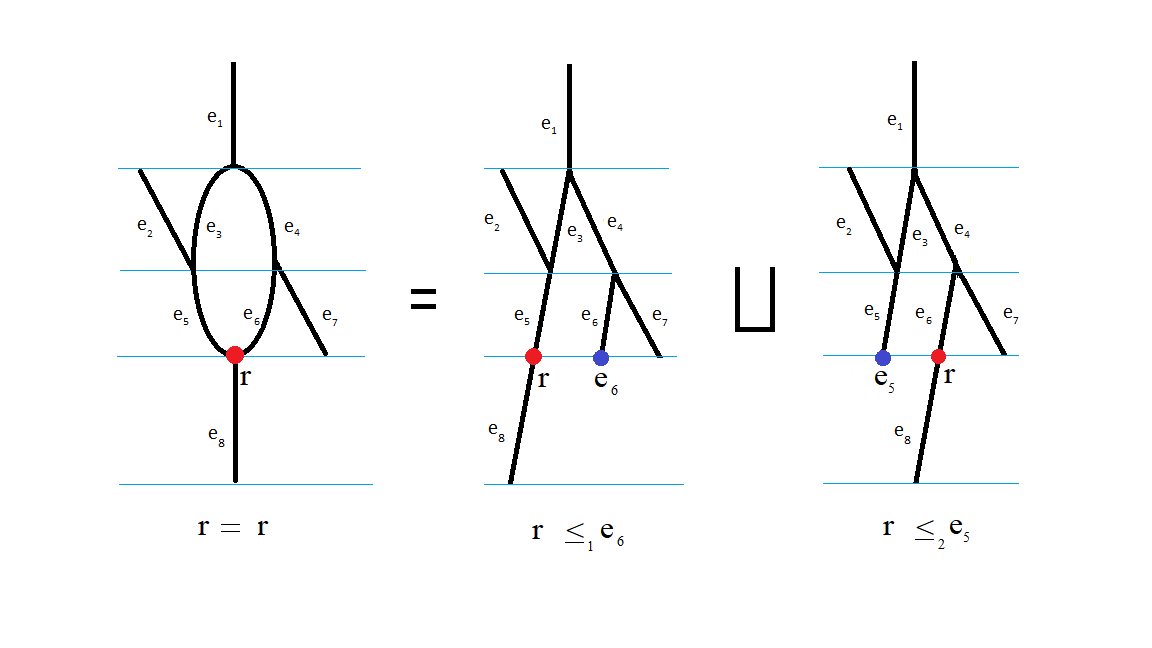}
   \end{center}
   \caption{An example describing the tree-decomposition for $n=4$ and $s=1$.}
\end{figure}
\end{example}

\begin{example}
\label{ex:3}
Consider a  Reeb graph $\X=(\X,f)$ with a single reticulation-vertex--where in this particular example we also think of it as a leaf--three edges, and two cycles, and edges labelled by $e_1,e_2,e_3$, as shown in  Fig.~3.
By applying Thm.~\ref{thm:ClassReeb} we get three trees with $4$-leaves in the tree-decomposition of $\X$.
Note that the trees are in one to one correspondece with all possibilities of isolating an edge (which is identified with the reticulation-vertex) from the other edges, given that they are incident from above some reticulation vertex.  
Thus there are exactly three such trees with $4$-leaves.

\begin{figure}[h!]
\label{fig:tree}
  \begin{center}
      \includegraphics[width=0.8\textwidth]{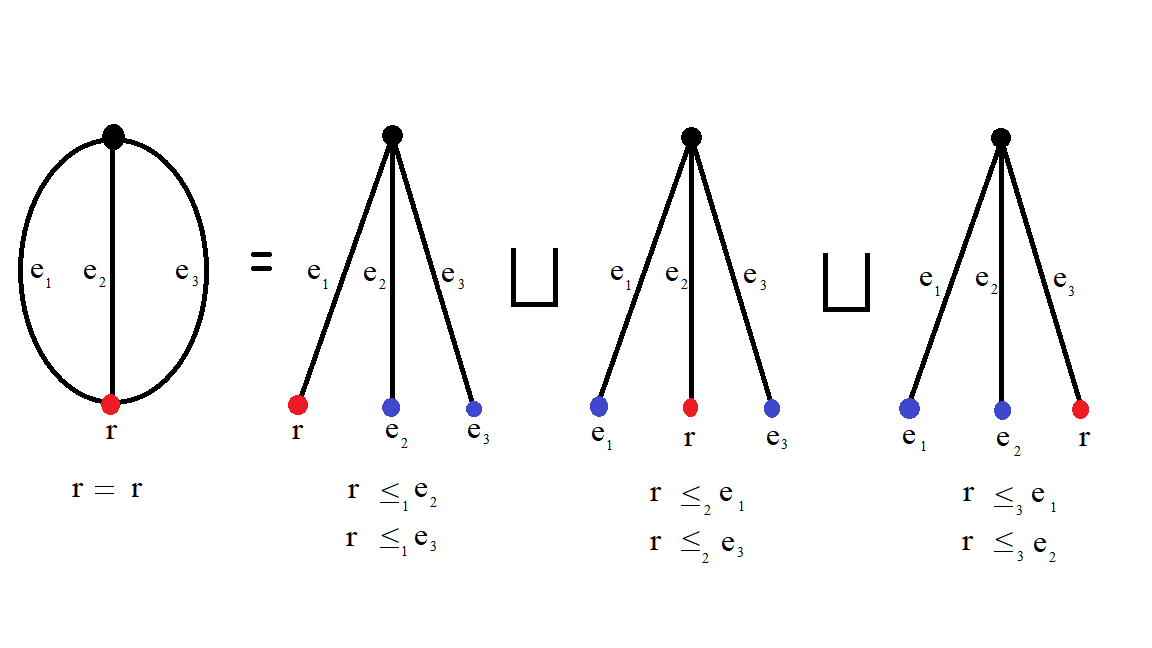}
   \end{center}
   \caption{An example describing the tree-decomposition for $n=1$ and $s=2$.}
\end{figure}
\end{example}

\section{Isomorphism complexity of Reeb graphs}
Thm.~\ref{thm:ClassReeb} can help  improve our understanding of the isomorphism complexity of Reeb graphs  under some fixed parameter.
Here we will refer to \textit{isomorphism complexity} simply by \textit{complexity}.

\subsection{Upper bound on the complexity} Let $\X=(\X,f)$ be a Reeb graph with edge-sequence $\E$. 
Let $\Y=(\Y,g)$ any Reeb graph.
Checking whether $\Y$ has same edge-structure as $\X$ is equivalent to checking whether $\Y$ has equivalent edge-sequence with $\E$.
This takes time $O(|\E|)$, where $|\E|$ is the cardinality of $\E$.
Now, by Euler's characteristic formula we have $s=|\E|-|\V|+1$, and so $|\E|=|\V|+s-1$.
By definition, if two Reeb graphs $(\X,f),(\Y,g)$ are isomorphic, then they have the same edge structure.
Therefore:
$$\biggl(\text{Complexity of Reeb graphs}\biggl)=\biggl(\text{Complexity of Reeb graphs of same edge structure}\biggl) + O(|\V|+s).$$  
By Thm.~\ref{thm:ClassReeb}, two Reeb graphs of same edge structure, with $n$ leaves and Betti number $B_1(\X)=s$, are isomorphic if and only if they have the same tree-decomposition.
To check for isomorphism it takes at worst, as many number of steps as the cardinality of the set $\TT(\X)$ times the complexity of checking whether two ordered trees with $(n+s)$-leaves are isomorphic in $\Reeb^{ord}[\E_\bullet]$, where $\E_\bullet$ is the edge structure of $\X$. 
Namely:
$$\biggl(\text{Complexity of Reeb graphs of same edge structure}\biggl)=\biggl(\text{Complexity of ordered trees}\biggl)\cdot O(|\TT(\X)|).$$
Now checking if two trees with $(n+s)$-leaves are isomorphic takes time $O(n+s)$.\footnote{It is known from basic graph theory that checking  if two trees with $k$-leaves are isomorphic takes $O(k)$ steps }.
By construction, the cardinality of $\TT(\X)$ of a Reeb graph $\X$ is equal to the product of indegrees of the reticulation-vertices,
%, $r_1,\ldots,$ 
%constructed from Algorithm 2, for $i=1,\ldots,m$, 
i.e.
$$|\mathcal{T}(\X)|=\prod_{i=1}^{m}d_i$$
Therefore we get
$$\biggl(\text{Complexity of Reeb graphs}\biggl)=O((n+s)\prod_{i=1}^{m}d_i)+O(|\V|+s).$$
Now because $L(\X)\subset \V$ and $|L(\X)|=n$, we get $n\leq |\V|$.
Also since $s\geq 0$ ($s=0$ if $\X$ is a tree) and $|\V|\geq 2$, we have the obvious bounds $n+s\leq |\V|+s\leq |\V|(1+s)$.
Thus:
$$\biggl(\text{Complexity of Reeb graphs}\biggl)=O(|\V| (1+s)\prod_{i=1}^{m}d_i)+O(|\V|(1+s))=O(|\V| (1+s)\prod_{i=1}^{m}d_i).$$

\subsection{Parametrized complexity}
Although this is a good bound, we would like to consider an upper bound on the isomorphism complexity that does not depend on the indegrees of reticulation-vertices, but  only depends on the number of vertices $|\V|$ and the Betti number $B_1(\X)=s$.
Since each of the indegrees of the reticulation nodes is $d_i>1$ we have the bound $d_i\leq 2^{d_i-1}$.
Taking the product over all these inequalities and since $$\sum_{i=1}^{m}(d_i-1)=s,$$
we get
$$\biggl(\text{Complexity of Reeb graphs with $B_1(\X)=s$}\biggl)=O(|\V| (1+s) 2^{s}).$$
Hence, the Reeb graph isomorphism problem is fixed parameter tractable when the parameter is the first Betti number of the graph.
Finally, note that the  bound $d_i\leq 2^{d_i-1}$ is tight: indeed, if we consider Reeb graphs with $n$-leaves and  $B_1(\X)=s$, such that the indegree of each of its reticulation-vertices is $2$, then the product of all the indegrees is exactly $2^s$.

\section{Phylogenetic networks viewed as  ordered Reeb graphs}
In computational phylogenetics a \textit{phylogenetic network} is any rooted DAG, $G$, that can be used to represent the evolutionary relationships among biological organisms, e.g.~genes, often called \textit{taxa}. 
These taxa are represented by an ordering on the leaves of the rooted DAG.
\begin{remark}
Quite often in practice, the taxa are represented by the totaly ordered set $\{1<2<\ldots<n\}$. 
However, this is very restrictive for general phylogenetic networks. 
Namely, one can have a phylogenetic network where some of its leaves are ordered in many different ways, and some of them might not be labelled at all.
So it is better to consider a partial order on the leaves.
\end{remark}
Naturally one considers a pair of phylogenetic networks to be \textit{isomorphic}  if there is an isomorphism between their underlying DAGs that also preserves the \textit{order}  of their corresponding taxa.

\subsection{Time consistent phylogenetic networks}
In many applications, phylogenetic networks are equipped with a \textit{time assignment} on the nodes or vertices, which we can think of as a height function $f:G\to \R$ on the graph $G$.
The time assignment map $f$ is often required to be \textit{time consistent} which essentially means that the time stamp of any `non-root node' should be strictly larger than the time stamp of its `parent node'.
See \cite{huson2010phylogenetic}, pg 167. 
This is exactly what Reeb graphs satisfy as we emphasized in Ex~\ref{ex:counter}.
Moreover, if we restrict to ordered 
%\footnote{By a simple Reeb graph we mean a Reeb graph whose underlying minimal graph obtain by the critical set is simple graph, i.e.~for any pair of vertices there is at most one edge connecting them} 
Reeb graphs, then these structures
can be thought of as vertex weighted DAGs (see Sec.~\ref{sec:vertex-weighted}) such that their edge sets and vertex sets are partially ordered in a compative way.
So we believe that ordered rooted Reeb graphs whould serve as a natural mathematical model for time consistent phylogenetic networks.  
By thinking of a time consistent phylogenetic network as an ordered Reeb graph, we can also consider its tree decomposition, given by Thm.~\ref{thm:ClassReeb}.
Since the phylogenetic network is rooted and has its leaves partially ordered, by construction, each of the trees $\T_w$, yielded by the network's tree decomposition, would have their leaves partially ordered.
Therefore, each tree $\T_w$ is a phylogenetic tree.

\subsection{Hausdorff distance} 
Among many distance metrics, we can utilize the $\ell^p$-cophenetic metrics, $d_p$, for $1\leq p\leq \infty$, for comparing a pair of   phylogenetic trees \cite{cardona2013cophenetic,munch2018ell}.
Lets focus on $d_\infty$ (for $p=\infty$). 
The idea is that, via the cophenetic map \cite{sokal1962comparison}, any phylogenetic tree with $n$ ordered leaves, $\{1<\ldots<n\}$, can be identified with a single point in the $[n(n+1)/2]$-dimensional Euclidean space.
The $(i,j)$-th coordinate of the point, where $i\leq j$, corresponds to the time-stamp of the \textit{least common ancestor} of the leaves  $i,j$ in the phylogenetic tree \cite{munch2018ell}.
Then, we simply consider the $\ell^\infty$-norm for comparing a pair of phylogenetic trees with $n$ ordered leaves. 
By the tree decompostion of Thm.~\ref{thm:ClassReeb}, we can thus identify any time consistent phylogenetic network with $n$ ordered leaves, and first Betti number $s$, as a finite subset  of the the $[(n+s)(n+s+1)/2]$-dimensional Euclidean space.  
Hence, we can utilize the Hausdorff distance for comparing a pair of phylogenetic networks. 
\begin{definition}
Let $(X,d_X)$ be any metric space, and let $A,B$ be any subsets of $X$.
The \textit{Hausdorff distance of $A,B$} is 
$$d_H(A,B)=\max\{\max_{a\in A}\min_{b\in B}d_X(a,b),\max_{b\in B}\min_{a\in A} d_X(a,b)\}.$$ 
\end{definition}

\begin{example}
Consider the pair $N_1,N_2$ of time consistent phylogenetic networks as in Fig.~4, modelled as ordered Reeb graphs, and having the non-trivial partial order on their leaves, $1<2$.
\begin{figure}[h!]
\label{fig:tree}
  \begin{center}
      \includegraphics[width=0.8\textwidth]{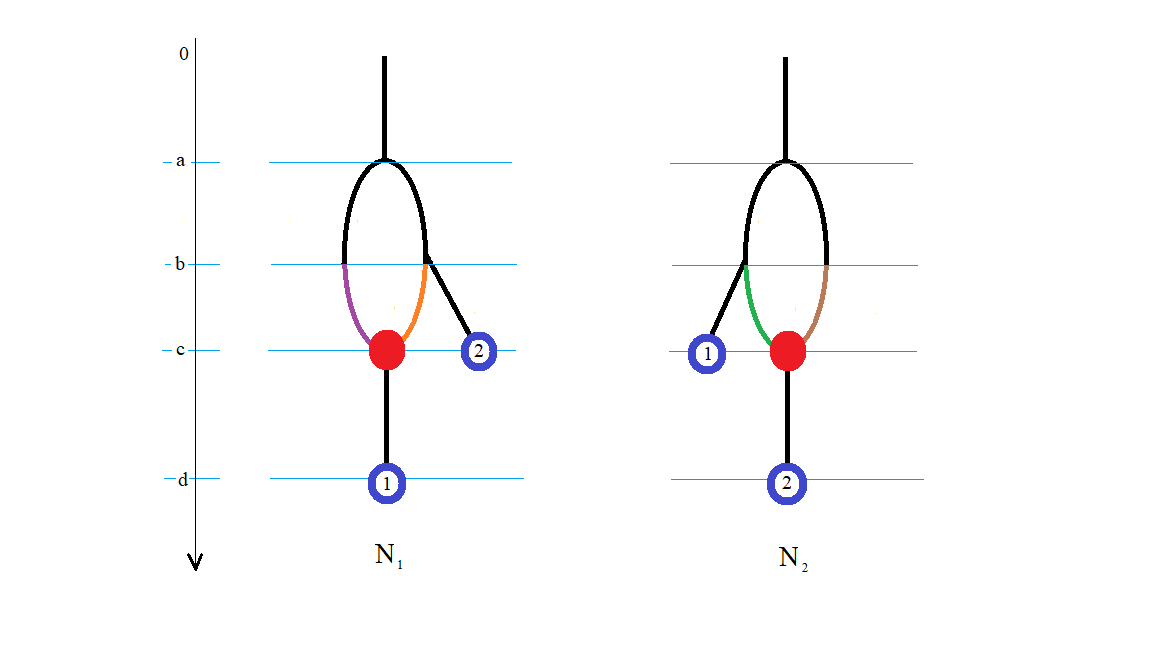}
   \end{center}
   \caption{A pair of time consistent phylogenetic networks, each having $2$ leaves and $1$ cycle.}
\end{figure}
\begin{figure}[h!]
\label{fig:tree}
  \begin{center}
      \includegraphics[width=0.8\textwidth]{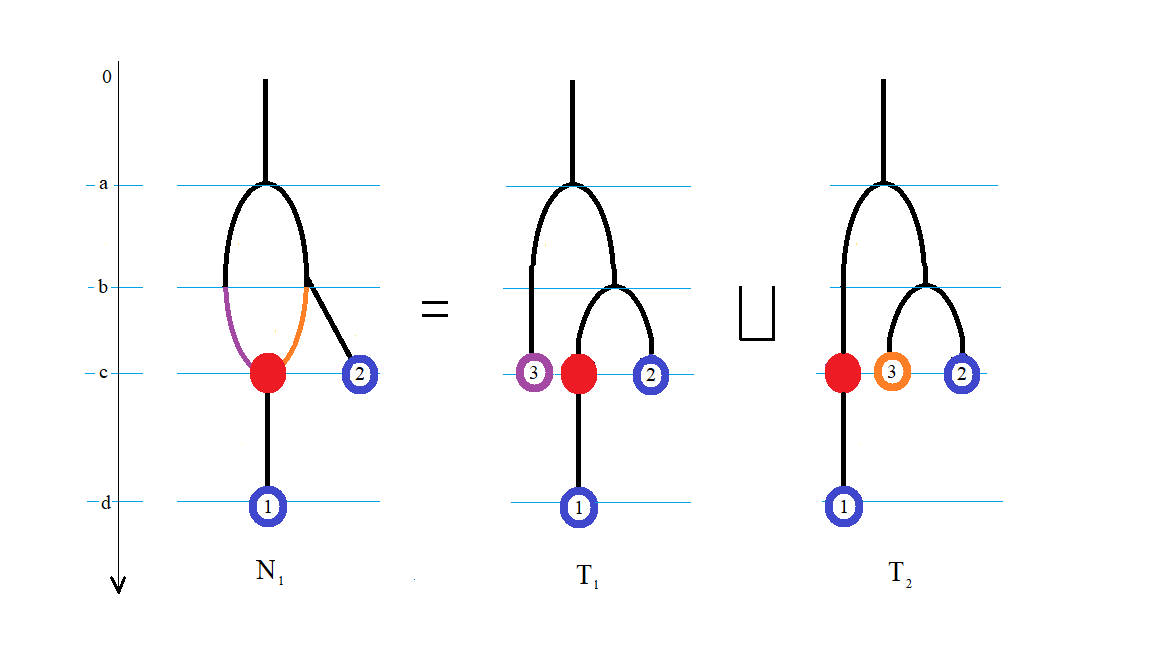}
   \end{center}
   %\caption{The additional leaf in each tree corresponds to an edge that is incident from above the reticulation vertex.}
\end{figure}
\begin{figure}[h!]
\label{fig:tree}
  \begin{center}
      \includegraphics[width=0.8\textwidth]{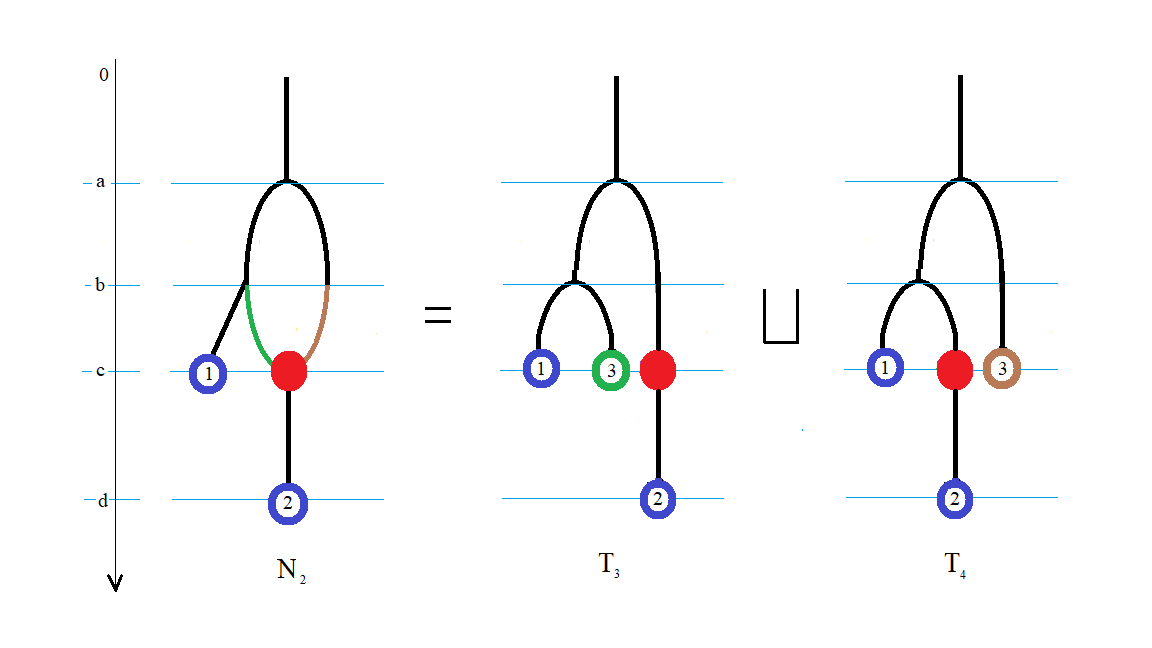}
   \end{center}
   \caption{The additional leaf in each tree corresponds to an edge incident from above the reticulation vertex.}
\end{figure}

Also assume $0<a<b<c<d$ and $c-b<b-a<d-c$ as in Fig.~4.
It is easy to check that, if we forget the non-trivial order on the leaves of $N_1,N_2$, then these networks are isomorphic as ordinary Reeb graphs.
However they are not order-preserving isomorphic.
We do this by showing that the Hausdorff distance--with respect to $d_\infty$--of the tree decompositions of $N_1,N_2$ is non-zero.
Let $\TT(N_1)=\{T_1,T_2\}$ and $\TT(N_2)=\{T_3,T_4\}$ be the tree decompositions of $N_1$ and $N_2$ as in Fig.~5.
We have 
\begin{align*}
d_\infty(T_1,T_3)&=\biggl|\biggl|
\left( {\begin{array}{ccc}
   d & b & a\\
   \cdot & c & a\\
\cdot & \cdot& c\\
  \end{array} } \right)-
\left( {\begin{array}{ccc}
    c & a & b\\
   \cdot & d & a\\
\cdot & \cdot & c\\
  \end{array} } \right)\biggl|\biggl|_\infty= d-c,\\
d_\infty(T_1,T_4)&=\biggl|\biggl|
\left( {\begin{array}{ccc}
     d & b & a\\
   \cdot & c & a\\
\cdot & \cdot& c\\
  \end{array} } \right)-
\left( {\begin{array}{ccc}
   c & b & a\\
   \cdot & d & a\\
\cdot & \cdot & c\\
  \end{array} } \right)\biggl|\biggl|_\infty =d-c,\\
d_\infty(T_2,T_3)&=\biggl|\biggl|
\left( {\begin{array}{ccc}
     d & a & a\\
   \cdot & c & b\\
\cdot & \cdot& c\\
  \end{array} } \right)-
\left( {\begin{array}{ccc}
    c & a & b\\
   \cdot & d & a\\
\cdot & \cdot & c\\
  \end{array} } \right)\biggl|\biggl|_\infty=d-c,\\
d_\infty(T_2,T_4)&=\biggl|\biggl|
\left( {\begin{array}{ccc}
     d & a & a\\
   \cdot & c & b\\
\cdot & \cdot & c\\
  \end{array} } \right)-
\left( {\begin{array}{ccc}
   c & b & a\\
   \cdot & d & a\\
\cdot & \cdot & c\\
  \end{array} } \right)\biggl|\biggl|_\infty=d-c,\\
\end{align*}
since $c-b<b-a<d-c$.
Hence, we obtain
\begin{align*}
d_H(\TT(N_1),\TT(N_2))&=\max\biggl\{\max_{1\leq i\leq 2}\bigl\{\min_{3\leq j\leq4}d_\infty(T_i,T_j)\bigl\},\max_{3\leq j\leq 4}\bigl\{\min_{1\leq i\leq 2}d_\infty(T_i,T_j)\bigl\}\biggl\}\\
&=d-c>0.
\end{align*}

\end{example}

\begin{remark}
\label{rem:hausdorff complexity}
Assume $A,B$ are finite. 
By definition, the Hausdorff distance between $A,B$ can be computed in at most $|A|\cdot |B|$-many steps.
That means, in particular, that computing the Hausdorff distance of a pair of phylogenetic networks with $n$ leaves and first Betti number $s$, takes $O(|\V|^2 (1+ s)^2 4^{s})$ time.
Namely, computing the Hausdorff distance is fixed parameter tractable when the parameter is the first Betti number.
\end{remark}
%In particular, because the phylogenetic network $G$ is always simple DAG, all the trees

\section{Concluding remarks}
In this manuscript we showed that   Reeb graphs are classified up to isomorphism by their tree-decomposition, we constructed upper bounds for their isomorphism complexity, and showed that the isomorphism problem for Reeb graphs is fixed parameter tractable when the parameter is the first Betti number.
We proposed $\Reeb^{ord}$ as model for phylogenetic networks.
Moreover, we proposed the use of Hausdorff distance as a metric for phylogenetic networks with $n$ leaves and first Betti number $s$.
We speculate that our results, on one hand would further our understanding of both the structure and isomorphism complexity of Reeb graphs, and on the other hand they will enhance the existing methods on phylogenetic networks by providing new insights on how to do statistics and data analysis on these structures.
 
\textbf{Future work:} It is in the author's interests to apply properly the notion of Hausdorff distance to define a metric on time consistent phylogenetic networks. 
In order to do that some restrictions on the ordered Reeb graphs (phylogenetic networks) may need to be considered in addition to requiring a fixed number of leaves and cycles, e.g.~we may need to assume in particular a total order on the leaves and reticulation vertices.
Also a natural question to ask is whether there exist other tree decompositions for general ordered Reeb graphs, perhaps with fewer tree factors, and how do they look like.

Furthermore, just like Reeb graphs can be identified with nice enough cosheaves on $\R$ valued in the category $\Set$ of all sets \cite{de2016categorified}, it  seems that ordered Reeb graphs can be identified with constructible cosheaves $\Ffunc:\Int\to\mathbf{Pos}$ on $\R$ valued in the category $\mathbf{Pos}$ of all posets.
If this speculation is true then one can define a notion of \textit{interleaving distance} for ordered Reeb graphs--in the sense of Bubenik et al.~\cite{bubenik2015metrics}--and thus phylogenetic networks in particular.
The interleaving distance will thus be a metric on the collection of arbitrary phylogenetic networks, i.e.~not just the ones with fixed number of leaves and first Betti number.
This would give an advantage for one to use the interleaving metric over the Hausdorff distance as a metric for phylogenetic networks.
By Rem.~\ref{rem:hausdorff complexity}, computing the Hausdorff distance of a pair of phylogenetic networks is fixed paremeter tractable when the parameter is the first Betti number, but we do not know whether this is the case also for the interleaving distance. 
In the near future the author wishes to work towards the sheaf-theoretic aspects of ordered Reeb graphs and their implications to phylogenetics, e.g~defining an intelreaving metric for the comparison of arbitrary time consistent phylogenetic networks.

\section{Conflict of Interest Statement}
The author states that there is no conflict of interest.

\bibliographystyle{plain}  
%\bibliography{main}

\end{document}